\documentclass[12pt]{amsart}
\usepackage{amsmath, amssymb}
\let\temp\rmdefault
\usepackage{mathpazo}
\usepackage{mathrsfs}
\let\rmdefault\temp
\usepackage[margin=1in]{geometry}
\usepackage[amsmath]{empheq}
\usepackage{pdfrender,xcolor}
\usepackage{comment}
\usepackage{mathtools}
\usepackage{multicol}
\usepackage{enumitem, multicol}
\usepackage{setspace}
\usepackage[most]{tcolorbox}
\tcbset{colback=yellow!10!white, colframe=blue!50!blue, highlight math style= {enhanced, 
		colframe=blue,colback=red!10!white,boxsep=0pt}}
\usepackage[linkcolor=blue]{hyperref}
\hypersetup{colorlinks = true}
\newtheorem{theorem}{Theorem}[section]
\newtheorem{lemma}[theorem]{Lemma}

\newtheorem{proposition}[theorem]{Proposition}
\newtheorem{corollary}[theorem]{Corollary}

\newtheorem{definition}[theorem]{Definition}
\newtheorem{example}[theorem]{Example}

\newtheorem{Open Prob}[theorem]{Open Problem}
\newtheorem{remark}[theorem]{Remark}

\theoremstyle{definition}
\numberwithin{equation}{section}

\begin{document}
	
	
\title[A characterization of Banach spaces with numerical index one]{A characterization of Banach spaces with numerical index one}
\author{Subhadip Pal}
\address[Pal]{Subhadip Pal, Department of Mathematics, National Institute of Technology, Durgapur 713209, West Bengal, India.}
\email{palsubhadip2@gmail.com}
\author{Saikat Roy}
\address[Roy]{Saikat Roy, School of Advanced Sciences, VIT-AP University, Beside AP Secretariat, Amaravati, 522241, Andhra Pradesh, India.}
\email{saikatroy.cu@gmail.com, saikat.roy@vitap.ac.in}
\author{Debmalya Sain}
\address[Sain]{Debmalya Sain, Department of Mathematics, Indian Institute of Information Technology, Raichur 584135, Karnataka, India.}
\email{saindebmalya@gmail.com}
    
\subjclass[2020]{Primary 46B20, 47A12, Secondary 47B01.}
\keywords{Numerical range; Banach spaces with numerical index one; Dual space of operators; Extreme point; Support functional.\\
The research of Mr. Subhadip Pal is supported by the University Grants Commission, Government of India. Dr. Debmalya Sain acknowledges the financial support received from an ANRF-ECRG Project (ANRF/ECRG/2024/000436/PMS), titled ``Applications of the norm attainment problem in the geometry of Banach spaces and topological vector spaces"}

\begin{abstract}
We investigate the extremal properties of the unit ball of $L(X)_w^*$, the dual space of bounded linear operators defined on a Banach space $X$ equipped with the numerical radius norm. As an application of the present study, we obtain a geometric characterization of Banach spaces with numerical index one, which extends the well-known McGregor's characterization of finite-dimensional Banach spaces with numerical index one. We also present refinements of several earlier results in this direction, including an explicit description of the extreme points of $B_{L(X)_w^*}$, the unit ball of $L(X)_w^*$, for any finite-dimensional Banach space $X$. This allows us to obtain an independent and elementary proof of McGregor's characterization of finite-dimensional Banach spaces with numerical index one.
\end{abstract}
\maketitle
\section{Introduction}
\noindent
The numerical index is an important constant in the isometric theory of Banach spaces. In particular, Banach spaces with numerical index one enjoy several special geometric and analytic properties, which serve as the main motivation for a better understanding of such spaces. We refer the readers to the excellent monograph \cite{KMMP} for a detailed exposition of the current state-of-the-art on Banach spaces with numerical index one and their applications to the study of the so-called spear operators between Banach spaces. A complete characterization of finite-dimensional Banach spaces with numerical index one was obtained by McGregor in the seminal article \cite{MG}. Let us also point out that McGregor's characterization \cite[Theorem $3.1$]{MG}, by virtue of being completely geometric without involving operators, connects the study of Banach spaces with numerical index one with the underlying geometry of the space and its dual. In light of this finite-dimensional characterization, it became an important problem to find infinite-dimensional extensions of the same. We refer to \cite{KMMP, KMP, LMP}, and the references therein for more information on this topic. Although considerable progress has been made in this direction, an infinite-dimensional extension of  McGregor's characterization remains elusive. The main goal of this article is to complete this program by extending McGregor's characterization to the infinite-dimensional setting. Indeed, we completely characterize Banach spaces with numerical index one and obtain   McGregor's characterization as a direct consequence of our characterization. 

\medskip

\noindent
Let us now introduce the notation and terminology that will be used throughout this article.

\medskip

\noindent
Let $X$ be a Banach space and let $B_X$ and $S_X$ denote the closed unit ball and the unit sphere of $X,$ respectively. $E_{B_X}$ is the set of all extreme points of $B_X$. $X^*$ denotes the topological dual of $X$. Let $\theta$ be the zero vector in any vector space, except the scalar field. Throughout the article, $\mathbb{F}$ denotes the underlying scalar field, which can be either real or complex. For a nonzero vector $x$ in $X$, the collection of all \textit{support functional} at $x$ is denoted by $J(x)$, i.e., $J(x):=\{x^*\in S_{X^*}: x^*(x)=\|x\|\}$, which is always non-empty by the Hahn-Banach Theorem. We refer to $E_{J(x)}$ as the set of all extreme support functional at $x$. For any uni-modular scalar $\mu$, the section of support functionals at $x$ by $\mu$ is denoted as $J_\mu(x)$ and is defined by $J_\mu(x):=\{x^*\in S_{X^*}: x^*(x)=\mu\|x\|\}$. Evidently, $J_\mu(x)\neq \emptyset$ for any $\mu$, by the Hahn-Banach Theorem. For a fixed $\mu$, $J_\mu(x)$ is a convex and weak*-compact subset of $S_{X^*}$. Let $\psi :X\to X^{**}$ be the canonical embedding $x\mapsto \psi(x)$, where $\psi(x):X^*\to \mathbb{F}$ is given by $\psi(x)(g)=g(x)$ for each $ g \in X^* $ and for each fixed $x\in X$. Throughout this article, we identify $X$ with $\psi(X)$, whenever required. We further note that each such $ \psi(x) $ is a weak*-continuous functional on $ X^*.$ A \textit{face} of a convex set $U$ in a Banach space $ X $ is a non-empty subset of the form $\{x\in U: \operatorname{Re} x^*(x)=\sup_{U}(\operatorname{Re}x^*)\}$, where $x^*\in X^*$ is such that $\operatorname{Re}x^*$ attains its supremum on $U$.

\medskip

\noindent
For any Banach spaces $X$ and $Y$, let $L(X, Y)$($L(X)$, if $X=Y$) denote the space of all bounded linear operators from $X$ to $Y,$ endowed with the usual operator norm. Let $\text{Id}_X: X\to X$ denote the identity operator in $L(X)$. Whenever $ X $ is fixed and well-understood from the context, we simply write $ \text{Id}_X: X\to X =  \text{Id}. $ Given any $T\in L(X)$, we define 
\begin{equation}
W(T):=\{x^*(Tx): x^*\in S_{X^*}, x\in S_X, x^*(x)=1\},   
\end{equation}
\begin{equation}
w(T):=\sup\{|\xi|: \xi\in W(T)\}.   
\end{equation}
The quantities $W(T)$ and $w(T)$ are called the \textit{numerical range} and \textit{numerical radius} of $T$ respectively. The numerical radius $w(\cdot)$ is a seminorm on $L(X)$ and is equivalent to the operator norm if it defines a norm. In a complex Hilbert space $H$, $w(\cdot)$ always defines a norm on $L(H)$. Throughout this article, we assume that $w(\cdot)$ induces a norm on $L(X)$. The vector space $L(X),$ when endowed with the numerical radius norm, is denoted by $ L(X)_w. $

\medskip

\noindent
The concept of \textit{numerical index} of a Banach space $X$, denoted as $n(X)$, was first introduced by Lumer in 1968 (see \cite{DMPW}):
\[
n(X):= \inf\{w(T): T\in S_{L(X)}\}=\max\{k\geq 0: k\|T\|\leq w(T), T\in L(X)\}.
\]
Following the above definition, it is trivial to see that a Banach space $X$ has numerical index one if and only if $\|T\|=w(T)$, for any $T\in L(X)$. The study of Banach spaces with numerical index one is a deep and active direction of research, with important applications in operator theory. For some of the recent works on Banach spaces with numerical index one and the current state-of-the-art, the reader is referred to \cite{KMMP, KMMPQ, KMMS, KLMM, MMQRS, SPBB}. McGregor \cite{MG} obtained the following geometric characterization of \textit{finite-dimensional Banach spaces with numerical index one:}

\begin{theorem}\cite{MG}\label{MC}
Let $X$ be a finite-dimensional Banach space. Then $n(X)=1$ if and only if $|x^*(x)|=1$ for every $(x, x^*)\in E_{B_X}\times E_{B_{X^*}}$.
\end{theorem}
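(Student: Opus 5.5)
Both implications come from identifying the extreme points of the unit ball of $L(X)_w^*$ and of $L(X)^*$ (operator norm). In finite dimensions $L(X)$ is finite-dimensional, so all duals are reflexive and we may work with compact convex sets and Krein--Milman throughout.

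\textbf{Step 1: the two unit balls.} By definition, $w(T)=\sup\{|x^*(Tx)|: (x,x^*)\in\Pi\}$, where $\Pi=\{(x,x^*)\in S_X\times S_{X^*}: x^*(x)=1\}$. So $w$ is the supremum of the moduli of the functionals $x\otimes x^*:T\mapsto x^*(Tx)$ over $\Pi$. The bipolar theorem then gives
\[
B_{L(X)_w^*}=\overline{\mathrm{co}}\{\lambda\, x\otimes x^*: (x,x^*)\in\Pi,\ |\lambda|=1\},
\]
and the same argument for the operator norm gives
\[
B_{L(X)^*}=\mathrm{co}\{x\otimes x^*: x\in S_X,\ x^*\in S_{X^*}\}.
\]
We may restrict the second generating set to $x\in E_{B_X}$ and $x^*\in E_{B_{X^*}}$, because $(x,x^*)\mapsto x\otimes x^*$ is bilinear: writing $x$ and $x^*$ as convex combinations of extreme points writes $x\otimes x^*$ as a convex combination of such tensors. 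Milman's converse to Krein--Milman then places every extreme point of $B_{L(X)^*}$ among the $x\otimes x^*$ with $x\in E_{B_X}$, $x^*\in E_{B_{X^*}}$.

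\textbf{Step 2 ($\Leftarrow$).} Suppose $|x^*(x)|=1$ for all $x\in E_{B_X}$, $x^*\in E_{B_{X^*}}$. Take any such pair and write $x^*(x)=\mu$. Then $\bar\mu x^*$ is a norming functional for $x$, so $(x,\bar\mu x^*)\in\Pi$. Hence
\[
|x^*(Tx)|=|(\bar\mu x^*)(Tx)|\le w(T).
\]
Since $\|T\|=\sup|\phi(T)|$ over the extreme points $\phi$ of $B_{L(X)^*}$ (a linear functional attains its maximum modulus over a compact convex set at an extreme point), Step 1 gives $\|T\|\le w(T)$. The reverse inequality $w(T)\le\|T\|$ always holds, so $n(X)=1$.

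\textbf{Step 3 ($\Rightarrow$).} Suppose $n(X)=1$. Then the two norms coincide, so $B_{L(X)^*}=B_{L(X)_w^*}$. Fix $x_0\in E_{B_X}$ and $x_0^*\in E_{B_{X^*}}$. The key claim is that $x_0\otimes x_0^*$ is an extreme point of $B_{L(X)^*}$. Granting the claim, Milman's theorem applied to the generating set from Step 1 for $B_{L(X)_w^*}$ (which is compact, being the image of a compact set under a continuous map) yields $x_0\otimes x_0^*=\lambda\, x\otimes x^*$ for some $(x,x^*)\in\Pi$ and $|\lambda|=1$. Testing both sides on the rank-one operators $y^*\otimes y$ shows $y^*(x_0)\,x_0^*(y)=\lambda\, y^*(x)\,x^*(y)$ for all $y\in X$, $y^*\in X^*$. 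Therefore $x_0=\alpha x$ and $x_0^*=\beta x^*$ with $\alpha\beta=\lambda$ and $|\alpha|=|\beta|=1$ (all four vectors have norm one). It follows that
\[
|x_0^*(x_0)|=|\alpha\beta\, x^*(x)|=1.
\]

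\textbf{Main obstacle: the key claim.} I would prove it by identifying $L(X)^*$ with the projective tensor product $X\hat\otimes_\pi X^*$, the trace dual of $L(X)=L(X^{**})$ in finite dimensions. It is a known fact (Ruess--Stegall, or a direct argument) that $E_{B_{X\hat\otimes_\pi Y}}=E_{B_X}\otimes E_{B_Y}$. For a direct proof, I would suppose
\[
x_0\otimes x_0^*=\tfrac12(\phi_1+\phi_2),\qquad \phi_i\in B_{L(X)^*},
\]
and write each $\phi_i$ as a convex combination of elementary tensors $x\otimes x^*$. Evaluating on operators of the form $T=z\otimes y^*$ (that is, $u\mapsto y^*(u)z$) with $z^*(z)=1$ for a suitable norming functional $z^*$ reduces the problem to averages in $B_X$ and in $B_{X^*}$ separately. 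Extremality of $x_0$ then forces all the $x$'s appearing to be unimodular multiples of $x_0$, and extremality of $x_0^*$ forces the same for the $x^*$'s. This is the delicate part, since the phases must be tracked in the complex case. In the real case it can alternatively be handled by exposedness and density arguments.
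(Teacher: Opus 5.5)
Your proof is correct. It is essentially the paper's \emph{alternative} proof of Theorem~\ref{MC} (the one following Theorem~\ref{completely continuous}), not its main proof through Theorem~\ref{Infinite} and spear operators.

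For necessity you follow the same chain as the paper:
\begin{itemize}
\item $n(X)=1$ gives $B_{L(X)^*}=B_{L(X)_w^*}$.
\item Tensors of extreme points are extreme in $B_{L(X)^*}$, by a cited theorem.
\item Milman's theorem, applied to the compact set $\{\lambda\,x^*\otimes x: x\in S_X,\ x^*\in S_{X^*},\ x^*(x)=1,\ |\lambda|=1\}$, forces $x_0^*\otimes x_0=\lambda\,x^*\otimes x$ with $x^*(x)=1$. (Your $x\otimes x^*$ is the paper's $x^*\otimes x$.) This is exactly how the paper obtains $E_{B_{L(X)_w^*}}\subseteq\mathscr{A}$ in Theorem~\ref{completely continuous}.
\end{itemize}
You can skip the rank-one comparison at the end: evaluating both sides at $\mathrm{Id}$ gives $x_0^*(x_0)=\lambda$ at once.

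For sufficiency you are more economical than the paper. You use only the easy inclusion $E_{B_{L(X)^*}}\subseteq\{x^*\otimes x: x\in E_{B_X},\ x^*\in E_{B_{X^*}}\}$, which you prove yourself from bilinearity and Minkowski's theorem, and then $\|T\|\le w(T)$ follows directly. The paper instead needs the full equality of Theorem~\ref{completely continuous}, together with \cite[Theorem 1]{LO} and Krein--Milman applied to both balls.

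Two caveats.
\begin{itemize}
\item Your sketched ``direct'' proof of the key claim is not a proof: the complex phase bookkeeping you flag is exactly the hard part. So necessity rests on the citation, just as the paper's does.
\item State that citation in the form you actually need. For finite-dimensional $X$, under trace duality, $E_{B_{L(X)^*}}=E_{B_X}\otimes E_{B_{X^*}}$. This is Ruess--Stegall for real scalars and Lima--Olsen \cite[Theorem 1]{LO} for complex scalars (the paper's Theorem~\ref{Lima}). Ruess--Stegall alone does not cover the complex case, and neither result is a general theorem about $X\hat\otimes_\pi Y$ for arbitrary Banach spaces.
\end{itemize}
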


Geometric characterizations of (possibly infinite-dimensional) Banach spaces with numerical index one, in the spirit of the above result, are not known in the literature.  In this article, we obtain such a characterization, by studying the extreme points of $B_{L(X)_w^*}$. Our characterization involves the extreme points of the dual space of operators instead of operators themselves.

\medskip

\noindent
Let us now recollect a few standard definitions which are important for studying the numerical range of an operator on a Banach space. 
\begin{definition}\cite{KMMP}
Let $X$ be a Banach space and let $\mathscr{D}$ be a non-empty subset of $X$. $\mathscr{D}$ is said to be rounded if $S^1\mathscr{D}=\mathscr{D}$, where $S^1 :=\{\lambda\in \mathbb{F}:|\lambda|=1\}$ and $S^1\mathscr{D} :=\{\lambda y:\lambda\in S^1, y\in \mathscr{D}\}$. 
\end{definition}
\medskip
\noindent
For any non-empty subset $\mathscr{D}$ of a Banach space $ X, $ the closed convex hull of $\mathscr{D}$ is denoted by $\overline{\operatorname{co}}(\mathscr{D}).$ Similarly, the absolute convex hull of $\mathscr{D}$ is denoted by $\operatorname{aco}(\mathscr{D}).$ It is trivial to see that $\operatorname{aco}(\mathscr{D})=\operatorname{co}(S^1\mathscr{D})$.

\begin{definition}\label{Def1.4}\cite{KMMP} Let $\mathscr{D}, E$ be non-empty subsets of  $B_X$ and $X^*$, respectively. We say that $\mathscr{D}$ is \textit{norming} for $E$ if for every $f \in E$, $\|f\| = \sup \{|f(x)| : x \in \mathscr{D}\}$. Equivalently, this holds if $B_X = \overline{\operatorname{aco}}^{\sigma(X, E)}(\mathscr{D})$, where $\sigma(X, E)$ denotes the topology on $X$ induced by pointwise convergence of elements in $E$.
\end{definition}
\noindent
It is easy to observe that $\mathcal{D}\subseteq B_{X^*}$ is norming for $X$ if and only if $\|x\|=\displaystyle{\sup\{|f(x)|:f\in \mathcal{D}\}}$ for every $x\in X.$ Equivalently, $\mathcal{D} $ is norming for $X$ if and only if $ B_{X^*}=\overline{\operatorname{aco}}^{weak^*}(\mathcal{D})$.

\begin{definition}
Let $V$ be a non-empty closed convex subset of a Banach space $X$. A supporting hyperplane to $V$ is a hyperplane that contains $V$ in one of its closed half-spaces and intersects $V$ with at least one point. An element $x$ in the boundary of $V$ is said to be an exposed point of $V$ if there exists a hyperplane of support $\mathcal{H}$ to $V$ such that $\mathcal{H}\cap V=\{x\}$.
\end{definition}
The structure of this article is as follows: Apart from the introductory section, the article is divided into two main sections. The first section concerns the extreme points of $B_{L(X)_w^*},$ for a Banach spaces $X,$  offering a refinement of \cite[Theorem $2.1$]{M}. As the main highlight of this article, we present a geometric characterization of Banach spaces with numerical index one, which extends the classical McGregor's characterization of finite-dimensional Banach spaces with numerical index one. Next, we completely determine the extreme points of $B_{L(X)_w^*},$ for any finite-dimensional Banach space $X$, which improves \cite[Theorem $2.3$]{M}. Furthermore, this also provides an elementary alternative proof of McGregor's characterization. Additionally, we present a counting formula for the extreme points of $B_{L(X)_w^*},$ for any finite-dimensional real polyhedral Banach space $X$. 
	
\section{Extreme points of the unit ball in the dual of certain operator spaces under the numerical radius norm}
\noindent
In this section, we characterize the Banach spaces with numerical index one by studying the extreme points of $ B_{L(X)_w^*}. $ For any $x\in X$ and any $x^*\in X^*$, let us define $x^*\otimes x:L(X)_{w}\to \mathbb{F}$ by $[x^*\otimes x](T) = x^*(Tx),$ for each $ T \in L(X)_{w}. $ We begin with the following preliminary results on the extremal structure of $ B_{L(X)_w^*}.$ Later, we will observe in Remark \ref{Th1 remark} that the converse part of the following result is also true.

\begin{lemma}\label{BL2.1}
Let $X$ be a Banach space. Suppose that $ M \subseteq B_{L(X)_w^*} $ is non-empty, rounded, and norming for $L(X)_w$. Then 
\[
B_{L(X)_w^*} = \overline{\operatorname{co}}^{weak^*}(M).
\]
\end{lemma}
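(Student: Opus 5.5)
The plan is a Hahn--Banach separation argument in the weak$^*$ topology of $L(X)_w^*$. Write $K:=\overline{\operatorname{co}}^{weak^*}(M)$. Since $M\subseteq B_{L(X)_w^*}$ and the unit ball is convex and weak$^*$-closed (Banach--Alaoglu), we have $K\subseteq B_{L(X)_w^*}$. So it suffices to prove the reverse inclusion.

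Suppose, for a contradiction, that some $\phi\in B_{L(X)_w^*}$ does not lie in $K$. The set $K$ is nonempty, convex and weak$^*$-closed in the locally convex space $(L(X)_w^*,weak^*)$. Its dual is exactly $L(X)_w$, acting by evaluation $\Phi\mapsto\Phi(T)$. The separation theorem therefore gives an operator $T\in L(X)$ and a real number $c$ with
\[
\operatorname{Re}\Phi(T)\le c<\operatorname{Re}\phi(T)\quad\text{for all }\Phi\in K .
\]

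Next we use that $M$ is rounded. For $\Phi\in M$, choose $\lambda\in S^1$ with $\lambda\Phi(T)=|\Phi(T)|$. Then $\lambda\Phi\in M\subseteq K$, so $|\Phi(T)|=\operatorname{Re}(\lambda\Phi)(T)\le c$. Taking the supremum over $M$ and using that $M$ is norming for $L(X)_w$ gives $w(T)=\sup_{\Phi\in M}|\Phi(T)|\le c$. On the other hand, $\|\phi\|\le1$ gives $\operatorname{Re}\phi(T)\le|\phi(T)|\le w(T)\le c$, which contradicts the separation inequality. Hence $B_{L(X)_w^*}=K$.

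The only point needing care is identifying the weak$^*$-continuous functionals on $L(X)_w^*$ with the elements of $L(X)_w$. This is standard: the weak$^*$ dual of $Y^*$ is $Y$ for any normed space $Y$, and completeness of $L(X)_w$ is not even needed for this. The same standard fact is what justifies the equivalent formulation of norming recorded after Definition~\ref{Def1.4}. In the real case the rotation step is just $\lambda=\pm1$, and the argument is otherwise unchanged.
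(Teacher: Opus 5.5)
Your proof is correct: the separation step, the use of roundedness to turn $\operatorname{Re}\Phi(T)\le c$ on $M$ into $|\Phi(T)|\le c$, and the appeal to the norming property are all sound, and your remark that completeness of $L(X)_w$ is not needed is apt. The paper gives no written proof of this lemma and treats it as immediate from the equivalence recorded after Definition~\ref{Def1.4} (a set is norming exactly when the dual ball is its weak$^*$-closed absolutely convex hull), combined with $\operatorname{aco}(M)=\operatorname{co}(S^1M)=\operatorname{co}(M)$ for rounded $M$; your Hahn--Banach separation argument is the standard proof of that equivalence, so it is the same approach made explicit.
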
 
Before proving our next result, let us mention the following well-known fact that the extreme points of $ B_{L(X)_w^*} $ are contained in the weak*-closure of a particularly convenient subset of $ X^*\otimes X: $

\begin{theorem}\cite[Theorem $2.1$]{M}\label{AM2.2}
Let $X$ be a Banach space. Then $E_{B_{L(X)_w^*}}\subseteq \overline{\mathscr{A}}^{weak^*}$, where
\[
\mathscr{A}:=\{x^*\otimes x: x^*\in B_{X^*}, x\in B_X, |x^*(x)|=\|x^*\|\|x\|\}.
\]
\end{theorem}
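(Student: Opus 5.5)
We must prove Theorem \ref{AM2.2}: $E_{B_{L(X)_w^*}}\subseteq \overline{\mathscr{A}}^{weak^*}$. The plan is to apply Lemma \ref{BL2.1} to $M=\mathscr{A}$ and then invoke Milman's converse to the Krein--Milman theorem in the weak*-topology.

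\emph{Step 1: $\mathscr{A}\subseteq B_{L(X)_w^*}$.} Take $x^*\otimes x\in\mathscr{A}$ with $x,x^*\neq 0$ (the zero functional lies in the ball trivially). Write $x^*(x)=\mu\|x^*\|\|x\|$ with $|\mu|=1$ and set $y^*=\bar\mu x^*/\|x^*\|$, $y=x/\|x\|$. Then $y^*(y)=1$, so $y^*(Ty)\in W(T)$ for every $T\in L(X)$. Hence
\[
|[x^*\otimes x](T)|=\|x^*\|\|x\|\,|y^*(Ty)|\le w(T),
\]
so $\|x^*\otimes x\|_{L(X)_w^*}\le 1$.

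\emph{Step 2: $\mathscr{A}$ is nonempty, rounded and norming.} Roundedness holds because $\lambda(x^*\otimes x)=(\lambda x^*)\otimes x$, and $\lambda x^*$ still satisfies the defining equality, with the same norm. For norming, note that $\mathscr{A}$ contains every $x^*\otimes x$ with $x\in S_X$, $x^*\in S_{X^*}$, $x^*(x)=1$. Therefore
\[
\sup_{\phi\in\mathscr{A}}|\phi(T)|\ge \sup\{|\xi|:\xi\in W(T)\}=w(T),
\]
and Step 1 gives the reverse inequality.

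\emph{Step 3: conclude.} Lemma \ref{BL2.1} now gives $B_{L(X)_w^*}=\overline{\operatorname{co}}^{weak^*}(\mathscr{A})$. The set $B_{L(X)_w^*}$ is weak*-compact by Banach--Alaoglu, and the dual with its weak*-topology is a locally convex Hausdorff space. Milman's theorem says that if a compact convex set $K$ in such a space equals $\overline{\operatorname{co}}(S)$, then every extreme point of $K$ lies in $\overline{S}$. Applying this yields $E_{B_{L(X)_w^*}}\subseteq\overline{\mathscr{A}}^{weak^*}$.

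The main point requiring care is Step 2: $\mathscr{A}$ must be norming for $L(X)_w$ itself, with the numerical radius norm and not the operator norm. This holds exactly because $W(T)$ is built from the pairs appearing in $\mathscr{A}$. The standing assumption that $w$ is a norm is what makes $L(X)_w$ a normed space, and only its dual unit ball is used. Completeness of $L(X)_w$ is not needed, since Banach--Alaoglu holds for the dual of any normed space.
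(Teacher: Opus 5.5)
Your proof is correct and is essentially the paper's own route. The paper quotes this statement from \cite{M} without proof, but it re-derives this very inclusion at the start of the proof of Theorem \ref{completely continuous}, and proves the refinement Theorem \ref{Th1} the same way: show the set is a non-empty, rounded, norming subset of $B_{L(X)_w^*}$, apply Lemma \ref{BL2.1}, then Milman's converse to the Krein--Milman theorem. Working directly with $\mathscr{A}$, the norming property is immediate from the definition of $W(T)$, so you avoid the Krein--Milman reduction to extreme support functionals that the paper needs for the smaller set $\mathscr{M}$.
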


 We next present a refinement of the above result by considering only the extreme points of $ B_{X^*}, $ and thus replacing the set $ \mathscr{A} $ by a smaller set $ \mathscr{M}. $  It is worth mentioning in this context that our proof is completely different from the one given in \cite{M}, where the bipolar theorem was used. 
 
\begin{theorem}\label{Th1}
Let $X$ be a Banach space. Then $E_{B_{L(X)_{w}^*}}\subseteq \overline{\mathscr{M}}^{weak^*}$, where
\[
\mathscr{M}:=\{x^*\otimes x: x^*\in E_{B_{X^*}}, x\in S_X, |x^*(x)|=1\}.
\]
\end{theorem}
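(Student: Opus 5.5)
The plan is to combine Lemma \ref{BL2.1} with Milman's converse to the Krein--Milman theorem. The space $B_{L(X)_w^*}$ with the weak*-topology is a compact convex set. Milman's theorem says: if $K=\overline{\operatorname{co}}(M)$ for a subset $M$ of a compact convex set $K$ in a locally convex space, then $E_K\subseteq \overline{M}$. So it is enough to show that $\mathscr{M}$ is non-empty, rounded and norming for $L(X)_w$, and that $\mathscr{M}\subseteq B_{L(X)_w^*}$. Lemma \ref{BL2.1} then gives $B_{L(X)_w^*}=\overline{\operatorname{co}}^{weak^*}(\mathscr{M})$, and Milman's theorem gives $E_{B_{L(X)_w^*}}\subseteq\overline{\mathscr{M}}^{weak^*}$.

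The routine checks come first.

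\emph{Containment in the ball.} For $x^*\otimes x\in\mathscr{M}$ we have $x^*(x)=\mu$ with $|\mu|=1$. Hence $(\bar\mu x^*)(x)=1$, and
\[
|[x^*\otimes x](T)|=|(\bar\mu x^*)(Tx)|\le w(T).
\]
So $x^*\otimes x\in B_{L(X)_w^*}$.

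\emph{Roundedness.} For $|\lambda|=1$ we have $\lambda(x^*\otimes x)=(\lambda x^*)\otimes x$. Since $\lambda x^*$ is again an extreme point of $B_{X^*}$ and $|\lambda x^*(x)|=1$, the set $\mathscr{M}$ is rounded.

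\emph{Non-emptiness.} This follows from the norming property below, or directly from the Krein--Milman argument used there.

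The main step is the norming property: for each $T$, $w(T)=\sup\{|x^*(Tx)|: x^*\otimes x\in\mathscr{M}\}$. The inequality $\le$ from the left side is the containment above, so I must show $\sup\ge w(T)$. Fix $\varepsilon>0$ and a pair $x\in S_X$, $y^*\in S_{X^*}$ with $y^*(x)=1$ and $|y^*(Tx)|>w(T)-\varepsilon$. I now replace $y^*$ by an extreme point of $B_{X^*}$.

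The set $J(x)$ is a weak*-compact face of $B_{X^*}$: it is exposed by the weak*-continuous functional $\psi(x)$, since $\operatorname{Re}x^*(x)\le 1$ on $B_{X^*}$. By Krein--Milman, $J(x)=\overline{\operatorname{co}}^{weak^*}(E_{J(x)})$. Because $J(x)$ is a face, $E_{J(x)}\subseteq E_{B_{X^*}}$.

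The map $g\mapsto g(Tx)$ is weak*-continuous and linear. Its modulus is convex, so its supremum over $J(x)$ equals its supremum over $E_{J(x)}$. Concretely: if $|g(Tx)|<w(T)-\varepsilon$ for all $g\in E_{J(x)}$, then the same bound would hold on the weak*-closed convex hull, contradicting the choice of $y^*$. Hence there is $x^*\in E_{B_{X^*}}$ with $x^*(x)=1$ and $|x^*(Tx)|>w(T)-\varepsilon$, so $x^*\otimes x\in\mathscr{M}$.

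I expect the main obstacle to be the extreme-point replacement just described. One must check carefully that $J(x)$ is a face, so that its extreme points are extreme in $B_{X^*}$; this uses the standard argument that if $tg+(1-t)h\in J(x)$ with $g,h\in B_{X^*}$, then both $g$ and $h$ lie in $J(x)$. One must also be careful in the complex case, working with $\operatorname{Re}$ or with $|\cdot|$ throughout. The final step is confirming that Milman's theorem applies in the weak*-topology on $L(X)_w^*$.
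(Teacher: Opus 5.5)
Your proposal is correct and is essentially the paper's proof. You show that $\mathscr{M}\subseteq B_{L(X)_w^*}$ is non-empty, rounded and norming by applying Krein--Milman to the weak*-compact face $J(x)$; the paper uses the sections $J_\mu(x)$, which amounts to the same thing. Both arguments then conclude with Lemma~\ref{BL2.1} and Milman's converse to the Krein--Milman theorem. Your $\varepsilon$-formulation of the extreme-point replacement is slightly cleaner than the paper's comparison of $\alpha_1$ and $\alpha_2$; note only that the hypothesis you contradict should read $|g(Tx)|\le w(T)-\varepsilon$ for all $g\in E_{J(x)}$, a bound the weak*-closed convex hull still inherits.
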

\begin{proof}
We first show that $\mathscr{M}$ is non-empty. Consider any $x\in S_X$. It follows from the Hahn-Banach Theorem that $J(x)$ is a non-empty weak*-compact, convex subset of $B_{X^*}$. Therefore, by the Krein-Milman Theorem $J(x)$ has an extreme point, say $x_0^*$. We claim that $x_0^*\in E_{B_{X^*}}$. Suppose on the contrary that $x_0^*\notin E_{B_{X^*}}$, then there exist $\lambda\in (0, 1)$ and $x_1^*, x_2^*\in B_{X^*}\setminus \{x_0^*\}$ such that $x_0^*=\lambda x_1^*+(1-\lambda)x_2^*$. It is evident that $x_1^*(x)=x_2^*(x)=1$. Consequently, $x_1^*, x_2^*\in J(x)$ and $x_0^*$ fails to be an extreme point of $J(x)$, a contradiction. Thus, $x_0^*\otimes x\in\mathscr{M}$ and it is non-empty. Next, we show that $\mathscr{M}$ is norming for $L(X)_w$. Clearly, for any $T\in L(X)_w$,
\begin{equation}\label{NT2.2}
\begin{aligned}
w(T)&=\sup\{|[x^*\otimes x](T)|: x^*\in S_{X^*}, x\in S_X, |x^*(x)|=1\}\\
& = \sup\{|[x^*\otimes x](T)|: x\in S_X,~ x^*\in J_\mu(x),~\mu\in S^1\}.
\end{aligned}
\end{equation}
Observe that for a fixed $\mu_0\in S^1$, the collection $J_{\mu_0}(z)$ is a face of $B_{X^*}$. For a fixed pair $(x_0, \mu_0)\in S_X\times S^1$, we now show that 
\[
\sup\{|[x^*\otimes x_0](T)|: x_0\in S_X,~ x^*\in J_{\mu_0}(x_0)\}=\sup\{|[x^*\otimes x_0](T)|: x_0\in S_X,~ x^*\in E_{J_{\mu_0}(x_0)}\}.
\]
Let
\[
\alpha_1 := \sup \left\{ \left| [x^* \otimes x_0](T) \right| : x_0 \in S_X, \, x^* \in J_{\mu_0}(x_0) \right\}
\]
and 
\[
\alpha_2 := \sup \left\{ \left| [x^* \otimes x_0](T) \right| : x_0 \in S_X, \, x^* \in E_{J_{\mu_0}(x_0)} \right\}.
\]
Clearly, $\alpha_2 \leq \alpha_1$. We claim that $\alpha_2 \geq \alpha_1$. If possible, let $\alpha_2 < \alpha_1$. By applying the Krein-Milman Theorem on the weak*-compact convex set $J_{\mu_0}(x_0)$, we have
\[
J_{\mu_0}(x_0) = \overline{\operatorname{co}}^{weak^*} (E_{J_{\mu_0}(x_0)}).
\]
Therefore, for any $x^* \in J_{\mu_0}(x_0)$, there exists a net $\{ x^*_{\beta} \}_{\beta \in \Lambda}$ in $\operatorname{co} (E_{J_{\mu_0}(x_0)})$, say, $x^*_{\beta} = \sum_{i=1}^{k_{\beta}} \lambda_i^{(\beta)} x_{i}^{*^{(\beta)}}$, where each $x_{i}^{*^{(\beta)}} \in E_{J_{\mu_0}(x_0)}$, and the coefficients satisfy $1 \geq \lambda_i^{(\beta)} \geq 0,$ with $\sum_{i=1}^{k_{\beta}} \lambda_i^{(\beta)} = 1, \quad \text{for each } \beta \in \Lambda$, such that $x^*_{\beta} \xrightarrow[\beta]{\text{weak}^*} x^*$. It follows that $|x^*_{\beta}(Tx_0)| \to |x^*(Tx_0)|$.
However, $\sup\{|y^*(Tx_0)|:y^*\in E_{J_{\mu_0}(x_0)}\} < \alpha_1$. Therefore, $\sup\{|y^*(Tx_0)|:y^*\in \operatorname{co}(E_{J_{\mu_0}(x_0)})\} < \alpha_1$ and consequently, $\sup\{|y^*(Tx_0)|:y^*\in \overline{\operatorname{co}}^{weak^*}(E_{J_{\mu_0}(x_0)})\} < \alpha_1$. This leads to a contradiction, since 
\[
\overline{\operatorname{co}}^{weak^*} (E_{J_{\mu_0}(x_0)})=J_{\mu_0}(x_0)\quad \text{and}\quad\sup \left\{ \left| [x^* \otimes x_0](T) \right| : x_0 \in S_X, \, x^* \in J_{\mu_0}(x_0) \right\}=\alpha_1.
\]
Thus, $\alpha_2\geq \alpha_1$ and consequently, $\alpha_1=\alpha_2$. Therefore, in continuation with the equality \eqref{NT2.2}, it follows that
\[
w(T)= \sup\{|[x^*\otimes x](T)|: x\in S_X,~ x^*\in E_{J_{\mu}(x)}, \mu\in S^1\}.
\]
Since $E_{J_{\mu}(z)} \subset E_{B_{X^*}}$ for any $(z, \mu) \in S_X \times S^1$, it is easy to see that 
\begin{align*}
w(T)&= \sup\{|[x^*\otimes x](T)|: x\in S_X,~ x^*\in E_{B_{X^*}}, |x^*(x)|=1\}\\
&= \sup\{|[x^*\otimes x](T)|: x^*\otimes x\in \mathscr{M}\}.
\end{align*}
It follows that for any $x^*\otimes x\in \mathscr{M}$, $|[x^*\otimes x](T)|\leq w(T)$ and consequently, $\mathscr{M}\subseteq B_{L(X)_w^*}$. Thus, $\mathscr{M}$ is norming for $L(X)_w$. Evidently, $\mathscr{M}$ is rounded. Therefore, it follows from Lemma \ref{BL2.1} that $B_{L(X)_{w}^*}=\overline{\operatorname{co}}^{weak^*}(\mathscr{M}).$ Then, the desired conclusion $E_{B_{L(X)_{w}^*}}\subseteq \overline{\mathscr{M}}^{weak^*}$ follows directly from \cite[Theorem $7.8$]{C}, thereby completing the proof.
\end{proof}

Additionally, if $ X $ is reflexive then $ B_X $ is weakly compact. Therefore, by applying the Krein-Milman Theorem on $ B_X $ in the same way as before, we can further strengthen Theorem \ref{Th1}.

\begin{corollary}\label{reflexive}
Let $X$ be a reflexive Banach space. Then 
\[
E_{B_{L(X)_{w}^*}}\subseteq \overline{\{x^*\otimes x: x^*\in E_{B_{X^*}}, x\in E_{B_X}, |x^*(x)|=1\}}^{weak^*}.
\]
\end{corollary}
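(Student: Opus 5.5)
Mirroring the proof of Theorem \ref{Th1}, we show that the smaller set
\[
\mathscr{N}:=\{x^*\otimes x: x^*\in E_{B_{X^*}}, x\in E_{B_X}, |x^*(x)|=1\}
\]
is non-empty, rounded, contained in $B_{L(X)_w^*}$ and norming for $L(X)_w$. Lemma \ref{BL2.1} then gives $B_{L(X)_w^*}=\overline{\operatorname{co}}^{weak^*}(\mathscr{N})$, and Milman's converse to the Krein--Milman theorem \cite[Theorem $7.8$]{C} yields $E_{B_{L(X)_w^*}}\subseteq\overline{\mathscr{N}}^{weak^*}$.

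Roundedness is clear, since $\lambda x\in E_{B_X}$ whenever $x\in E_{B_X}$ and $|\lambda|=1$. Since $\mathscr{N}\subseteq\mathscr{M}\subseteq B_{L(X)_w^*}$, it remains to show non-emptiness and the norming property. For non-emptiness, take any $x^*\in E_{B_{X^*}}$ (non-empty by Krein--Milman applied to the weak*-compact ball). By reflexivity $x^*$ attains its norm, so the face
\[
F_{x^*}:=\{x\in B_X: x^*(x)=1\}
\]
is non-empty. It is weakly compact and convex, so by Krein--Milman it has an extreme point. Exactly as with $J(x)$ in the proof of Theorem \ref{Th1}, an extreme point of a face of $B_X$ is an extreme point of $B_X$.

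For the norming property, fix $T$. By Theorem \ref{Th1},
\[
w(T)=\sup\{|x^*(Tx)|: x^*\otimes x\in\mathscr{M}\}.
\]
It therefore suffices to show that for each fixed $x^*\in E_{B_{X^*}}$ with $F_{x^*}$ non-empty,
\[
\sup\{|x^*(Tx)|: x\in F_{x^*}\}=\sup\{|x^*(Tx)|: x\in E_{F_{x^*}}\}.
\]
Pairs with $x^*(x)=\mu$ reduce to this case by replacing $x$ with $\bar\mu x$, which changes neither $|x^*(Tx)|$ nor extremality. The equality holds because $x\mapsto x^*(Tx)$ is a weakly continuous linear functional on $X$, as $T^*x^*\in X^*$, and $F_{x^*}=\overline{\operatorname{co}}^{weak}(E_{F_{x^*}})$ by Krein--Milman. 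The modulus of a continuous linear functional does not increase under convex combinations or weak limits, which is the same argument as the $\alpha_1=\alpha_2$ step in Theorem \ref{Th1}. Since $E_{F_{x^*}}\subseteq E_{B_X}$, every term on the right-hand side comes from $\mathscr{N}$, so $\mathscr{N}$ is norming.

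The main obstacle is the step that swaps the roles of $x$ and $x^*$. In Theorem \ref{Th1} the extreme point $x^*$ was chosen depending on $x$. Here we must keep $x^*$ extreme and make $x$ extreme at the same time. We handle this by first fixing an extreme $x^*$ via Theorem \ref{Th1}, then taking extreme points in the face $F_{x^*}$ of $B_X$, which does not disturb the choice of $x^*$. Reflexivity enters in two places: it makes $F_{x^*}$ non-empty and it makes $F_{x^*}$ weakly compact.
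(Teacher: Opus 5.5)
Your proposal is correct and follows essentially the same route as the paper. The paper only remarks that reflexivity makes $B_X$ weakly compact, so that Krein--Milman can be applied on the $X$-side ``in the same way as'' in Theorem \ref{Th1}; you carry this out explicitly by fixing $x^*\in E_{B_{X^*}}$, passing to extreme points of the weakly compact face $F_{x^*}$ of $B_X$ (non-empty by reflexivity, with $x\mapsto (T^*x^*)(x)$ weakly continuous), and then invoking Lemma \ref{BL2.1} and Milman's theorem.
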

We can directly derive the following remark using a similar technique as in the proof of Theorem \ref{Th1}.

\begin{remark}\label{Th1 remark}
Suppose that $M \subseteq B_{L(X)_w^*} $ is non-empty and rounded. Then we can say that $B_{L(X)_w^*}=\overline{\operatorname{co}}^{weak^*}(M)$ if and only if $M$ is norming for $L(X)_w$. Indeed, if $M$ is norming for $L(X)_w$ then by Lemma \ref{BL2.1}, we have $B_{L(X)_w^*}=\overline{\operatorname{co}}^{weak^*}(M)$. To prove the converse implication, we note that for any $T\in L(X)_w,$ we have
\begin{align*}
w(T)&=\sup\{|f(T)|:f\in B_{L(X)_w^*}\}\\
&=\sup\{|f(T)|:f\in \overline{\operatorname{co}}^{weak^*}(M)\}\\
&=\sup\{|f(T)|:f\in M\},
\end{align*}
where the last equality follows from a similar technique used in the proof of Theorem \ref{Th1}. This establishes the desired conclusion. Furthermore, in a similar way, we can conclude that $B_{L(X)^*}=\overline{\operatorname{co}}^{weak^*}(M)$ if and only if $M \subseteq B_{L(X)^*} $ is non-empty, rounded, and norming for $L(X)$.
\end{remark}

As an application to Theorem \ref{Th1}, it is possible to refine the necessary part of \cite[Theorem $3.4$]{M}. Before stating Theorem $3.4$ of \cite{M}, let us first mention the concept of Birkhoff-James orthogonality \cite{B, J}. For $x, y\in X$, we say that $x$ is \textit{Birkhoff-James orthogonal} to $y,$ written as $ x \perp_B y, $ if $\|x+\lambda y\|\geq \|x\|$ for all scalars $\lambda$. For any subspace $E$ of $X$, we say that $x\perp_B E$ if $x\perp_B y$ for all $y\in E$. Birkhoff-James orthogonality on $L(X)$ has been widely explored (see \cite{G, MPS, TR}), with important applications to the study of differentiability properties in operator spaces. For $T, A \in L(X)_w$, we say that $T$ is \textit{Birkhoff-James orthogonal} to $A$ \textit{with respect to the numerical radius norm}, denoted by $T\perp_w A,$ if for all scalars $\lambda$, the inequality  
\[
w(T + \lambda A) \geq w(T)
\]  
holds. We refer the reader to the recent article \cite{RS} for some applications of numerical radius Birkhoff-James orthogonality.

\begin{theorem}\cite[Theorem $3.4$]{M}\label{Th2.6}
Let $X$ be a Banach space, and let $\mathscr{W}$ be an $n$-dimensional subspace of $L(X)_w$. Suppose $T \in L(X)_w$ with $T \neq 0$. Then $T \perp_w \mathscr{W}$ if and only if the following conditions hold.  
\begin{itemize}
\item[(a)] There exist positive scalars $c_1, c_2, \dots, c_m > 0$ ($m\leq n+1$ if $\mathbb{F}=\mathbb{R}$ and $m\leq 2n+1$ if $\mathbb{F}=\mathbb{C}$) such that $\sum_{i=1}^{m} c_i = 1$.
\item[(b)] For each $1 \leq i \leq m$, there exists a net $\{x_{i\beta}^* \otimes x_{i\beta}\}_\beta$ in $\mathscr{A}$, where $\mathscr{A}$ is same as mentioned in Theorem \ref{AM2.2}, satisfying $\lim_{\beta} x_{i\beta}^*(Tx_{i\beta}) = w(T)$ and  
\[
\sum_{i=1}^{m} c_i \lim_{\beta} x_{i\beta}^*(Ax_{i\beta}) = 0, \quad \forall A \in \mathscr{W}.
\]
\end{itemize}
\end{theorem}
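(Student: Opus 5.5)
We prove Theorem \ref{Th2.6} by the standard convex-analysis route: Birkhoff-James orthogonality to a finite-dimensional subspace is characterized by a functional in the dual unit ball that norms $T$ and annihilates $\mathscr{W}$. That functional is then decomposed with Carathéodory's theorem and Theorem \ref{AM2.2}.

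\emph{Sufficiency.} Assume (a) and (b) hold. Define $f:=\sum_{i=1}^m c_i\lim_\beta x_{i\beta}^*\otimes x_{i\beta}$ as a weak*-limit, passing to subnets if needed; this is possible because $\mathscr{A}\subseteq B_{L(X)_w^*}$ and that ball is weak*-compact. Then $f\in B_{L(X)_w^*}$, $f(T)=w(T)$ and $f|_{\mathscr{W}}=0$. Hence for every $A\in\mathscr{W}$ and every scalar $\lambda$,
\[
w(T+\lambda A)\ge |f(T+\lambda A)|=w(T),
\]
so $T\perp_w\mathscr{W}$.

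\emph{Necessity, step 1.} Assume $T\perp_w\mathscr{W}$. Then $T$ has distance $w(T)$ to $\mathscr{W}$ in $L(X)_w$. By Hahn-Banach there is $f\in S_{L(X)_w^*}$ with $f(T)=w(T)$ and $f|_{\mathscr{W}}=0$.

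\emph{Necessity, step 2: reduce to finitely many coordinates.} Let $A_1,\dots,A_n$ be a basis of $\mathscr{W}$ and consider the map
\[
\Phi:L(X)_w^*\to\mathbb{F}^{n+1},\qquad g\mapsto (g(T),g(A_1),\dots,g(A_n)).
\]
It is weak*-continuous. The set $K:=\Phi(B_{L(X)_w^*})$ is compact and convex. The point $p:=\Phi(f)=(w(T),0,\dots,0)$ satisfies $\operatorname{Re}$ of its first coordinate equal to $w(T)=\max_K \operatorname{Re}(\text{first coordinate})$. So $p$ lies in the face $F:=\{q\in K:q_0=w(T)\}$, which has real dimension at most $n$ (real case) or $2n$ (complex case, where the first coordinate is pinned to a real number).

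By Krein-Milman, $B_{L(X)_w^*}=\overline{\operatorname{co}}^{weak^*}(E_{B_{L(X)_w^*}})$. Its image gives $K=\operatorname{co}\Phi(E_{B_{L(X)_w^*}})$, since the image is compact and finite-dimensional, and the extreme points of $K$ are images of extreme points. Extreme points of the face $F$ are extreme points of $K$.

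Carathéodory applied in $F$ writes $p=\sum_{i=1}^m c_i\Phi(e_i)$ with $e_i\in E_{B_{L(X)_w^*}}$, $c_i>0$, $\sum c_i=1$ and $\Phi(e_i)\in F$. Here $m\le n+1$ in the real case and $m\le 2n+1$ in the complex case, which is (a).

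\emph{Necessity, step 3.} By Theorem \ref{AM2.2}, each $e_i$ is a weak*-limit of a net $x_{i\beta}^*\otimes x_{i\beta}$ in $\mathscr{A}$. Then $\lim_\beta x_{i\beta}^*(Tx_{i\beta})=e_i(T)=w(T)$, because $\Phi(e_i)\in F$. Also
\[
\sum_i c_i\lim_\beta x_{i\beta}^*(A_jx_{i\beta})=\sum_i c_i e_i(A_j)=0
\]
for each $j$, and by linearity this holds for all $A\in\mathscr{W}$, which is (b).

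The main obstacle is step 2. There we must check carefully that the extreme points of the compact convex set $K$ are attained as images of extreme points of the weak*-compact ball. The argument is: the preimage of an extreme point of $K$ is a closed face of the ball, and by Krein-Milman that face contains an extreme point of the ball. We must also get the dimension count on $F$ right in the complex case.
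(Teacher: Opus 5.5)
Your argument is correct. The paper does not reprove this statement: it is quoted from \cite{M}, and the proof of the refined version is omitted as routine. Your route is exactly the argument the cited result rests on: Hahn--Banach gives a functional norming $T$ and annihilating $\mathscr{W}$; Carath\'eodory on the face $F$ of $\Phi(B_{L(X)_w^*})$, with extreme points lifted via Krein--Milman, re-derives Singer's extreme-functional characterization of best approximation; and Theorem \ref{AM2.2} then approximates each $e_i$ by nets in $\mathscr{A}$. Running the same proof with Theorem \ref{Th1} in place of Theorem \ref{AM2.2} gives the paper's refinement, which is why the authors call that refinement straightforward.
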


In light of Theorem \ref{Th1}, the following refinement of the necessary part of Theorem \ref{Th2.6} is rather straightforward to prove. To avoid repetition of arguments, we omit the proof and invite the reader to verify the details.

\begin{theorem}
Let $X$ be a Banach space, and let $\mathscr{W}$ be an $n$-dimensional subspace of $L(X)_w$. Suppose $T \in L(X)_w$ with $T \neq 0$. Then $T \perp_w \mathscr{W}$ if and only if the following conditions hold.  
\begin{itemize}
\item[(a)] There exist positive scalars $c_1, c_2, \dots, c_m > 0$ ($m\leq n+1$ if $\mathbb{F}=\mathbb{R}$ and $m\leq 2n+1$ if $\mathbb{F}=\mathbb{C}$) such that $\sum_{i=1}^{m} c_i = 1$.
\item[(b)] For each $1 \leq i \leq m$, there exists a net $\{x_{i\beta}^* \otimes x_{i\beta}\}_\beta$ in $\mathscr{M},$ where $\mathscr{M}$ is same as mentioned in Theorem \ref{Th1}, satisfying $\lim_{\beta} x_{i\beta}^*(Tx_{i\beta}) = w(T)$ and  
\[
\sum_{i=1}^{m} c_i \lim_{\beta} x_{i\beta}^*(Ax_{i\beta}) = 0, \quad \forall A \in \mathscr{W}.
\]
\end{itemize}
\end{theorem}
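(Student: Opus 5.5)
The sufficiency part needs no new work. Since $\mathscr{M}\subseteq\mathscr{A}$, any family of nets in $\mathscr{M}$ satisfying (a) and (b) is also a family of nets in $\mathscr{A}$ satisfying the hypotheses of Theorem \ref{Th2.6}. Hence $T\perp_w \mathscr{W}$ follows from the converse part of Theorem \ref{Th2.6}.

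For necessity, I will rerun the standard argument behind Theorem \ref{Th2.6}, with Theorem \ref{Th1} used in place of Theorem \ref{AM2.2}.

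\textbf{Step 1 (supporting functionals).} Assume $T\perp_w\mathscr{W}$. The finite-dimensional characterization of Birkhoff--James orthogonality (a Singer-type theorem: $T\perp_w\mathscr{W}$ iff there exist $f_1,\dots,f_m\in E_{B_{L(X)_w^*}}$ and $c_i>0$ with $\sum c_i=1$, $f_i(T)=w(T)$, and $\sum c_i f_i(A)=0$ for all $A\in\mathscr{W}$) then supplies the required data. The bounds $m\le n+1$ (real) and $m\le 2n+1$ (complex) come from Carathéodory's theorem applied in $\mathscr{W}^*\cong\mathbb{R}^n$ or $\mathbb{R}^{2n}$. To see this, note that the set
\[
F=\{f\in B_{L(X)_w^*}: f(T)=w(T)\}
\]
is a weak*-compact face of $B_{L(X)_w^*}$, so $E_F\subseteq E_{B_{L(X)_w^*}}$. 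Orthogonality gives $0\in\{f|_{\mathscr{W}}: f\in F\}$, and this set equals the weak*-continuous image of $F=\overline{\operatorname{co}}^{weak^*}(E_F)$. Because $\mathscr{W}$ is finite-dimensional, the image of $\overline{\operatorname{co}}(E_F)$ is the convex hull of the compact image of $\overline{E_F}$. Carathéodory then yields $f_i\in\overline{E_F}^{weak^*}$. These $f_i$ may lie only in the closure of the extreme points, which is harmless, because $\overline{E_{B_{L(X)_w^*}}}\subseteq\overline{\mathscr{M}}^{weak^*}$ as $\overline{\mathscr{M}}^{weak^*}$ is closed.

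\textbf{Step 2 (approximating by nets in $\mathscr{M}$).} By Theorem \ref{Th1}, each $f_i$ lies in $\overline{\mathscr{M}}^{weak^*}$. So there is a net $x_{i\beta}^*\otimes x_{i\beta}\in\mathscr{M}$ converging weak* to $f_i$ in $L(X)_w^*$. Evaluating at $T$ and at each $A\in\mathscr{W}$ gives
\[
\lim_\beta x_{i\beta}^*(Tx_{i\beta})=f_i(T)=w(T),\qquad \sum_i c_i\lim_\beta x_{i\beta}^*(Ax_{i\beta})=\sum_i c_i f_i(A)=0,
\]
which is exactly (b).

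\textbf{Expected obstacle.} The only delicate point is Step 1: making sure the functionals produced by the Carathéodory/Krein--Milman argument are genuinely in $\overline{\mathscr{M}}^{weak^*}$, rather than merely in $\overline{\operatorname{co}}^{weak^*}(\mathscr{M})$. I would handle this by working with the face $F$ as above, since its extreme points are extreme in the whole ball. A simpler alternative is to copy verbatim the proof of \cite[Theorem $3.4$]{M}, replacing each appeal to Theorem \ref{AM2.2} with Theorem \ref{Th1}.
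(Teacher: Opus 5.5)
Your proposal is correct and takes the route the paper intends. The paper omits this proof, saying only that it is a straightforward rerun of \cite[Theorem $3.4$]{M} with Theorem \ref{Th1} in place of Theorem \ref{AM2.2}; as you note, sufficiency is inherited from Theorem \ref{Th2.6} because $\mathscr{M}\subseteq\mathscr{A}$, and your face-plus-Carath\'eodory argument, giving $f_i\in\overline{E_F}^{weak^*}\subseteq F\cap\overline{\mathscr{M}}^{weak^*}$, correctly supplies the details the paper leaves to the reader.
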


We are now ready to present the highlight of this article, which is the characterization of the Banach spaces $ X $ having numerical index one. The main importance of this characterization stems from the fact that it is an infinite-dimensional extension of McGregor's characterization of finite-dimensional Banach spaces having numerical index one. Indeed, we obtain McGregor's characterization as a consequence of the general characterization proved in this article. We would like to point out that our approach focuses on the extreme points of the unit ball $B_{L(X)^*},$ and does not explicitly involve operators. It is important to note that our result is \textit{in the same spirit as that of McGregor \cite{MG}}.  It is worth mentioning in this context that in \cite{LMP}, Lopez et al. obtained the following sufficient condition for $n(X)=1$.

\begin{proposition}\cite{LMP}
Let $X$ be an infinite-dimensional Banach space. If $|\chi(x^*)|=1$ for every $x^*\in E_{B_{X^*}}$ and every $\chi\in E_{B_{X^{**}}}$ then $n(X)=1$.
\end{proposition}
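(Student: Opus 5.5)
We must show that $w(T)=\|T\|$ for every $T\in L(X)$. Since $w(T)\le\|T\|$ always holds, it suffices to prove $\|T\|\le w(T)$.

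Fix $T\in L(X)$ and $\varepsilon>0$. The norm of $T$ can be computed through its adjoint:
\[
\|T\|=\|T^*\|=\sup\{|\chi(T^*x^*)|: x^*\in B_{X^*},\ \chi\in B_{X^{**}}\}.
\]
Reduce the supremum to extreme points in two steps.

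\begin{itemize}
\item[(a)] For each fixed $\chi$, the map $x^*\mapsto \chi(T^*x^*)$ is not weak*-continuous, so Krein--Milman on $B_{X^*}$ cannot be used directly. Instead, first write
\[
\|T^*\|=\sup\{|(T^*x^*)(x)|: x^*\in B_{X^*},\ x\in B_X\}.
\]
For fixed $x$, the map $x^*\mapsto x^*(Tx)$ is weak*-continuous. By Krein--Milman its modulus attains its supremum over $B_{X^*}$ on $\overline{\operatorname{co}}^{weak^*}(E_{B_{X^*}})$, and hence (as in the proof of Theorem \ref{Th1}) on $E_{B_{X^*}}$. This gives
\[
\|T\|=\sup\{\|T^*x^*\|: x^*\in E_{B_{X^*}}\}.
\]
Choose $x_0^*\in E_{B_{X^*}}$ with $\|T^*x_0^*\|>\|T\|-\varepsilon$.

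\item[(b)] The norm $\|T^*x_0^*\|$ equals $\sup|\chi(T^*x_0^*)|$ over $\chi\in B_{X^{**}}$. The map $\chi\mapsto\chi(T^*x_0^*)$ is weak*-continuous on $X^{**}$, so Krein--Milman on the weak*-compact set $B_{X^{**}}$ gives $\chi_0\in E_{B_{X^{**}}}$ with $|\chi_0(T^*x_0^*)|>\|T\|-\varepsilon$. By hypothesis, $|\chi_0(x_0^*)|=1$.
\end{itemize}

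It remains to show that $|\chi_0(T^*x_0^*)|\le w(T)$. The idea is that $x_0^*\otimes\chi_0$, viewed as the functional $T\mapsto \chi_0(T^*x_0^*)$, is a weak*-limit of elements $x_\beta^*\otimes x_\beta$ with $|x_\beta^*(x_\beta)|\approx 1$. We construct this as follows.

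\begin{itemize}
\item[(c)] By Goldstine's theorem there is a net $x_\beta\in B_X$ with $x_\beta\to\chi_0$ weak* in $X^{**}$. Then
\[
x_0^*(x_\beta)\to\chi_0(x_0^*)=:\mu,\qquad |\mu|=1,
\]
and
\[
x_0^*(Tx_\beta)=(T^*x_0^*)(x_\beta)\to\chi_0(T^*x_0^*).
\]
Thus $x_0^*(x_\beta)$ is eventually within $\delta$ of the unimodular scalar $\mu$.

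\item[(d)] Here we need the Bishop--Phelps--Bollob\'as theorem, or more elementarily the standard estimate for approximate numerical range. If $x\in S_X$ and $x^*\in S_{X^*}$ satisfy $\operatorname{Re}\,\bar\mu x^*(x)>1-\delta$, then $|x^*(Tx)|\le w(T)+\eta(\delta)\|T\|$, where $\eta(\delta)\to0$ as $\delta\to0$. This follows from Bishop--Phelps--Bollob\'as: choose $(y,y^*)$ with $y^*(y)=1$, $\|y-x\|$ small and $\|y^*-\bar\mu x^*\|$ small. Then
\[
|x^*(Tx)|\le|y^*(Ty)|+o(1)\|T\|\le w(T)+o(1)\|T\|.
\]
A minor point is that $x_\beta$ need not have norm one. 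But $|x_0^*(x_\beta)|\to1$ forces $\|x_\beta\|\to1$, so we may normalize.
\end{itemize}

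Letting $\beta\to\infty$ gives $|\chi_0(T^*x_0^*)|\le w(T)$. Combining with (b), $\|T\|-\varepsilon<w(T)$, and letting $\varepsilon\to0$ yields $\|T\|\le w(T)$, hence $n(X)=1$.

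The main obstacle is step (d): passing from "approximately a support pair" to the numerical radius bound requires Bishop--Phelps--Bollob\'as, which is not among the results stated earlier in the paper. Steps (a) and (b) only mimic the Krein--Milman reduction already used in the proof of Theorem \ref{Th1}. Alternatively, one could note that $x_0^*\otimes\chi_0$ lies in $\overline{\mathscr{A}}^{weak^*}\subseteq B_{L(X)_w^*}$, using the known fact from \cite{M} that such limits are $w$-contractive; that fact, however, itself rests on the same approximation estimate.
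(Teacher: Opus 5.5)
Your proof is correct. The paper gives no proof of this proposition; it only quotes it from \cite{LMP}, so there is no internal argument to match step by step.

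Your steps (a)--(b) correctly show that the functionals $\phi_{\chi,x^*}\colon T\mapsto\chi(T^*x^*)$, with $\chi\in E_{B_{X^{**}}}$ and $x^*\in E_{B_{X^*}}$, are norming for $L(X)$. Your step (d), which combines Goldstine, the normalization of $x_\beta$, and Bishop--Phelps--Bollob\'as, correctly yields $|\chi_0(T^*x_0^*)|\le w(T)$ whenever $|\chi_0(x_0^*)|=1$.

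The usual way to close the argument is shorter. Put $\mu=\chi_0(x_0^*)$. Then $\bar\mu\chi_0\in S_{X^{**}}$ and $\bar\mu\chi_0(x_0^*)=1$. Hence $\bar\mu\chi_0(T^*x_0^*)\in W(T^*)$, and so $|\chi_0(T^*x_0^*)|\le w(T^*)=w(T)$. Your step (d) is in effect a self-contained proof of $w(T^*)\le w(T)$ in the one instance you need. It buys independence from the duality fact $w(T^*)=w(T)$, at the cost of invoking Bishop--Phelps--Bollob\'as.

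A route closer to this paper's machinery avoids the approximation step altogether. Let $\mathcal{N}$ be the set of all such $\phi_{\chi,x^*}$. It is rounded, lies in $B_{L(X)^*}$, and is norming for $L(X)$ by your (a)--(b). So by the $L(X)^*$-version of Remark \ref{Th1 remark} together with \cite[Theorem $7.8$]{C}, every $f\in E_{B_{L(X)^*}}$ is a weak*-limit of elements of $\mathcal{N}$. Since $|\phi_{\chi,x^*}(\text{Id})|=|\chi(x^*)|=1$ and evaluation at $\text{Id}$ is weak*-continuous, we get $|f(\text{Id})|=1$. The spear argument in the proof of Theorem \ref{Infinite} then gives $n(X)=1$. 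That version trades Bishop--Phelps--Bollob\'as for the results of \cite{KMMP}, which themselves rest on the identification of the spatial and intrinsic numerical radii.

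Finally, infinite-dimensionality is never used in your argument. That is as it should be, since the implication holds for every Banach space.
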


It is known that the above condition is not necessary for $ 
n(X) = 1, $ see \cite[Remark $4.2(c)$]{KMMS}. On the other hand, the following necessary conditions for $ n(X) = 1, $ were obtained in \cite{LMP}.

\begin{proposition}\cite[Lemma $1$]{LMP}
Let $X$ be a Banach space with numerical index one. Then
\begin{itemize}
\item[i)] $|\chi(x^*)|=1$ for every $\chi\in E_{B_{X^{**}}}$ and every weak*-denting point $x^*\in B_{X^*}$.
\item[ii)] $|x^*(x)|=1$ for every $x^*\in E_{B_{X^*}}$ and every denting point $x\in B_X$. 
\end{itemize}
\end{proposition}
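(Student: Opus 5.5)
The plan is to prove both parts by a single scheme. Given a denting point, we produce a rounded subset of the relevant dual ball that is norming. The norming property comes from testing $n(X)=1$, i.e. $w(T)=\|T\|$, on suitable rank-one operators. Milman's converse to the Krein--Milman theorem (as used via \cite[Theorem $7.8$]{C} in the proof of Theorem \ref{Th1}) then puts every extreme point into the closure of that set. Finally, the denting hypothesis forces elements of that closure to be almost unimodular at the given point. Throughout, for $f\in X^*$ and $y\in X$ we write $f\otimes y$ for the rank-one operator $z\mapsto f(z)y$, so that $\|f\otimes y\|=\|f\|\|y\|$.

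\emph{Part ii).} Fix a denting point $x\in B_X$ and $\varepsilon>0$. Choose a slice $S=\{z\in B_X:\operatorname{Re} f(z)>1-\alpha\}$ with $f\in S_{X^*}$, $x\in S$ and $\operatorname{diam}(S)<\varepsilon$; necessarily $\|x\|=1$. Put
\[
D:=\bigcup\{J(z): z\in S\cap S_X\}.
\]
The key step is to show that $S^1D$ is norming for $X$. For $y\in S_X$ we use $w(f\otimes y)=1$ to obtain $z_n\in S_X$ and $g_n\in J(z_n)$ with $|f(z_n)|\,|g_n(y)|\to 1$. Hence $|f(z_n)|\to1$ and $|g_n(y)|\to 1$. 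Rotating, $\lambda_n z_n\in S$ for a suitable $\lambda_n\in S^1$, and then $\overline{\lambda_n}g_n\in J(\lambda_n z_n)\subseteq D$. Therefore $\sup_{g\in S^1D}|g(y)|=1$.

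By the weak$^*$ version of Lemma \ref{BL2.1} for $X$ (Hahn--Banach separation), $B_{X^*}=\overline{\operatorname{co}}^{weak^*}(S^1D)$. Milman's theorem then gives $E_{B_{X^*}}\subseteq\overline{S^1D}^{weak^*}$. Each $\mu g$ with $g\in J(z)$, $z\in S$, satisfies
\[
|g(x)|\ge 1-|g(z-x)|>1-\varepsilon,
\]
and the condition $|\cdot(x)|\ge 1-\varepsilon$ is weak$^*$-closed. So every $x^*\in E_{B_{X^*}}$ has $|x^*(x)|\ge1-\varepsilon$, and letting $\varepsilon\to0$ gives $|x^*(x)|=1$.

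\emph{Part i).} This is the dual argument. Let $x^*$ be weak$^*$-denting and take a weak$^*$-slice $S^*=\{g\in B_{X^*}:\operatorname{Re} g(u)>1-\alpha\}$ with $u\in S_X$, $x^*\in S^*$ and $\operatorname{diam}(S^*)<\varepsilon$. Set
\[
D':=\{z\in S_X:\ \exists\, g\in S^*\cap S_{X^*} \text{ with } g(z)=1\}\subseteq B_{X^{**}}.
\]
For $y^*\in S_{X^*}$ we test $n(X)=1$ on $y^*\otimes u$, which has $w(y^*\otimes u)=1$. This yields $z_n$ and $g_n\in J(z_n)$ with $|y^*(z_n)|\,|g_n(u)|\to1$. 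Rotating the pair $(z_n,g_n)$ by a common unimodular scalar puts $g_n$ into $S^*$ while preserving $g_n(z_n)=1$. Hence $S^1D'$ is norming for $X^*$.

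Goldstine/bipolar then gives $B_{X^{**}}=\overline{\operatorname{co}}^{\sigma(X^{**},X^*)}(S^1D')$, and Milman's theorem gives $E_{B_{X^{**}}}\subseteq\overline{S^1D'}^{\sigma(X^{**},X^*)}$. For $z\in D'$ with witness $g$ we get $|x^*(z)|\ge 1-\|x^*-g\|>1-\varepsilon$, and this inequality passes to weak$^*$ limits. Letting $\varepsilon\to0$ yields $|\chi(x^*)|=1$ for every $\chi\in E_{B_{X^{**}}}$.

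The main obstacle is the norming step. It requires choosing the rank-one operator so that $w=\|\cdot\|$ simultaneously forces the point (respectively the functional) into the small slice and the pairing with the test vector to be almost maximal. It also requires the rotation by a unimodular scalar to be compatible with membership in $J(\cdot)$. The remaining steps are routine separation and Milman arguments as in Theorem \ref{Th1}.
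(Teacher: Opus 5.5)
The paper does not prove this proposition. It is quoted from \cite{LMP} as background, so there is no internal argument to compare yours with. Judged on its own, your proof is correct and complete.

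In both parts the norming step works as you describe. Testing $w(T)=\|T\|$ on $f\otimes y$ (resp.\ $y^*\otimes u$) forces both factors $|f(z_n)|$, $|g_n(y)|$ (resp.\ $|g_n(u)|$, $|y^*(z_n)|$) to tend to $1$. The rotation $(z_n,g_n)\mapsto(\lambda_n z_n,\overline{\lambda_n}g_n)$ keeps $g_n(z_n)=1$ while moving the relevant element into the slice. The remaining steps also go through:
\begin{itemize}
\item the Hahn--Banach separation giving $B_{X^*}=\overline{\operatorname{co}}^{w^*}(S^1D)$ and $B_{X^{**}}=\overline{\operatorname{co}}^{\sigma(X^{**},X^*)}(S^1D')$;
\item Milman's theorem, which applies because the closures lie in the weak$^*$-compact balls;
\item the estimates $|g(x)|\ge 1-\|z-x\|$ and $|x^*(z)|\ge 1-\|g-x^*\|$, which survive weak$^*$ limits because $\{\,|\cdot(x)|\ge 1-\varepsilon\,\}$ and $\{\,|\cdot(x^*)|\ge 1-\varepsilon\,\}$ are weak$^*$-closed.
\end{itemize}
One wording fix: in part i) you rotate ``by a common unimodular scalar.'' In the complex case you must multiply $z_n$ by $\lambda_n$ and $g_n$ by $\overline{\lambda_n}$, as you correctly did in part ii).

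Structurally, your route is the paper's own device from Theorem \ref{Th1}: take a rounded norming set, pass to its weak$^*$-closed convex hull as in Lemma \ref{BL2.1}, then apply Milman's converse to Krein--Milman. You run it one level down, on $B_{X^*}$ and $B_{X^{**}}$ rather than on $B_{L(X)_w^*}$. The small-diameter (weak$^*$-)slice supplies the quantitative control that turns membership in a weak$^*$-closure into near-unimodularity at the denting point. What your approach buys is a self-contained proof that uses $n(X)=1$ only on rank-one operators. The paper simply imports the result from \cite{LMP}.
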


Let us also mention here that in the same paper \cite{LMP}, it has been shown that a reflexive real Banach space with numerical index one must be finite-dimensional. It is an open problem whether the same result also holds true for complex Banach spaces. In addition to this, it has been proved in \cite{DMPW} that M-spaces, L-spaces, and their isometric preduals have numerical index one. We refer to \cite{KMP} for more information on this topic. Before presenting the promised characterization of Banach spaces with numerical index one, we recall the definition of a spear element \cite{A, KMMP} in a Banach space $X$. An element $z\in X$ is called a \textit{spear element} if for every $x\in X$ there is a modulus one scalar $t$ for which $\|z+tx\| = 1+\|x\|$ holds. 

\begin{theorem}\label{Infinite}
Let $X$ be a Banach space. Then the following statements are equivalent
\begin{itemize}
\item[(a)] $n(X)=1$.
\item[(b)] $B_{L(X)^*}=\overline{\operatorname{co}}^{weak^*}\{x^*\otimes x: x^*\in E_{B_{X^*}}, x\in S_X, |x^*(x)|=1\}$. 
\item[(c)] For any $f\in E_{B_{L(X)^*}}$ there exists a net $\{x_\beta^*\otimes x_\beta\}_{\beta\in\Lambda}$, where $x_\beta^*\in E_{B_{X^*}}$, $x_\beta\in S_X$ and $|x_\beta^*(x_\beta)|=1$, such that $x_\beta^*\otimes x_\beta\xrightarrow{weak^*} f$ in $L(X)^*$.
\item[(d)] For any $f\in E_{B_{L(X)^*}}$ there exists a net $\{x_\beta^*\otimes x_\beta\}_{\beta\in\Lambda}$, where $x_\beta^*\in S_{X^*}$, $x_\beta\in S_X$ and $|x_\beta^*(x_\beta)|=1$, such that $x_\beta^*\otimes x_\beta\xrightarrow{weak^*} f$ in $L(X)^*$.
\end{itemize}
\end{theorem}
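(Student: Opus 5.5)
We prove the cycle (a)$\Rightarrow$(b)$\Rightarrow$(c)$\Rightarrow$(d)$\Rightarrow$(a). The guiding observation is that $n(X)=1$ means precisely that the numerical radius and the operator norm coincide on $L(X)$, so that $L(X)_w$ and $L(X)$ are the same Banach space, with the same dual and the same dual unit ball.

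\emph{(a)$\Rightarrow$(b).} If $n(X)=1$, then $w(T)=\|T\|$ for all $T$. Hence $L(X)_w=L(X)$ isometrically, and $B_{L(X)_w^*}=B_{L(X)^*}$ with the same weak$^*$ topology. In the proof of Theorem \ref{Th1} we showed that $\mathscr{M}$ is non-empty, rounded, contained in $B_{L(X)_w^*}$, and norming for $L(X)_w$. Lemma \ref{BL2.1} then gives $B_{L(X)_w^*}=\overline{\operatorname{co}}^{weak^*}(\mathscr{M})$, and this is exactly (b).

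\emph{(b)$\Rightarrow$(c).} Under (b), the set $\mathscr{M}$ lies in $B_{L(X)^*}$, and $B_{L(X)^*}$ is its weak$^*$-closed convex hull. By the partial converse of Krein--Milman (Milman's theorem, \cite[Theorem $7.8$]{C}, already used in the proof of Theorem \ref{Th1}), every extreme point of $B_{L(X)^*}$ lies in $\overline{\mathscr{M}}^{weak^*}$. This yields the required net.

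\emph{(c)$\Rightarrow$(d).} This is trivial, since $E_{B_{X^*}}\subseteq S_{X^*}$.

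\emph{(d)$\Rightarrow$(a).} This is the substantive direction. Fix $T\in L(X)$. By Banach--Alaoglu, $B_{L(X)^*}$ is weak$^*$ compact, and Krein--Milman gives $B_{L(X)^*}=\overline{\operatorname{co}}^{weak^*}(E_{B_{L(X)^*}})$. The map $f\mapsto |f(T)|$ is a weak$^*$-continuous convex function, so its supremum over this closed convex hull equals its supremum over the extreme points:
\[
\|T\|=\sup\{|f(T)|: f\in E_{B_{L(X)^*}}\}.
\]
Now take $f\in E_{B_{L(X)^*}}$ and the net $x_\beta^*\otimes x_\beta\to f$ weak$^*$ provided by (d). Evaluating at $T$ gives $f(T)=\lim_\beta x_\beta^*(Tx_\beta)$.

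Each term lies in the rounded numerical range. Indeed, $|x_\beta^*(x_\beta)|=1$ means $x_\beta^*(x_\beta)=\mu_\beta$ for some $\mu_\beta\in S^1$. Then $\overline{\mu_\beta}x_\beta^*\in S_{X^*}$ satisfies $(\overline{\mu_\beta}x_\beta^*)(x_\beta)=1$, so
\[
|x_\beta^*(Tx_\beta)|=|(\overline{\mu_\beta}x_\beta^*)(Tx_\beta)|\le w(T).
\]
Passing to the limit gives $|f(T)|\le w(T)$. Taking the supremum over $f$ yields $\|T\|\le w(T)$. The reverse inequality $w(T)\le\|T\|$ always holds, so $w(T)=\|T\|$ for every $T$, and hence $n(X)=1$.

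The point that needs care is the equality $\|T\|=\sup_{f\in E_{B_{L(X)^*}}}|f(T)|$. It holds because $B_{L(X)^*}$ is the weak$^*$-closed convex hull of its extreme points and $f\mapsto|f(T)|$ is weak$^*$ continuous and convex, which is the same argument given for $\alpha_1=\alpha_2$ in the proof of Theorem \ref{Th1}. The rest is bookkeeping: in (a)$\Rightarrow$(b) and (b)$\Rightarrow$(c), the norms, duals and weak$^*$ topologies of $L(X)$ and $L(X)_w$ must be identified.
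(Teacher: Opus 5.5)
Your proposal is correct. For (a)$\Rightarrow$(b), (b)$\Rightarrow$(c) and (c)$\Rightarrow$(d) it coincides with the paper; the paper phrases (a)$\Rightarrow$(b) via Remark~\ref{Th1 remark}, which amounts to applying Lemma~\ref{BL2.1} to $L(X)$ once $w(\cdot)=\|\cdot\|$, exactly as you do.

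The genuine difference is in (d)$\Rightarrow$(a). The paper evaluates the approximating nets only at the identity and deduces $|f(\text{Id})|=1$ for every $f\in E_{B_{L(X)^*}}$. It then appeals to two results of \cite{KMMP}. By Proposition 3.2 there, this condition on extreme functionals makes $\text{Id}$ a spear element of $L(X)$. By Proposition 1.1 there, $\text{Id}$ is a spear of $L(X)$ if and only if $n(X)=1$.

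You instead evaluate the nets at an arbitrary $T$. You note that $x_\beta^*(Tx_\beta)=\mu_\beta\,(\overline{\mu_\beta}x_\beta^*)(Tx_\beta)$ is a unimodular multiple of an element of $W(T)$, which gives $|f(T)|\le w(T)$. You then finish with $\|T\|=\sup_{f\in E_{B_{L(X)^*}}}|f(T)|$, which follows from Krein--Milman.

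Your argument is self-contained and elementary. It uses only Krein--Milman and the definition of the numerical range, and it bypasses the spear-element theory, which encodes the nontrivial link between $w(T)$ and the norms $\|\text{Id}+\omega T\|$. The paper's route is shorter on the page and shows that much less information is needed. The single scalar condition $|f(\text{Id})|=1$ on extreme functionals already forces $n(X)=1$. Your argument, by contrast, uses the full weak$^*$ approximation of $f$ tested against every operator $T$.
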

\begin{proof}
$(a)\implies (b):$ Assume that $n(X)=1$. Then, for any $T\in L(X)$, $\|T\|=w(T)$. Therefore, it follows from Theorem \ref{Th1} that $\mathscr{M}\subseteq B_{L(X)^*} $ is non-empty, rounded, and norming for $L(X)$ as well. Consequently, by Remark \ref{Th1 remark} we have $B_{L(X)^*}=\overline{\operatorname{co}}^{weak^*}(\mathscr{M})$.

\medskip

$(b)\implies (c):$ Let $B_{L(X)^*}=\overline{\operatorname{co}}^{weak^*}\{x^*\otimes x: x^*\in E_{B_{X^*}}, x\in S_X, |x^*(x)|=1\}$. Then $E_{B_{L(X)^*}}\subseteq \overline{\{x^*\otimes x: x^*\in E_{B_{X^*}}, x\in S_X, |x^*(x)|=1\}}^{weak^*}$, by \cite[Theorem $7.8$]{C} and consequently, $(c)$ holds.

\medskip

$(c)\implies (d):$ Follows trivially.

\medskip

$(d)\implies (a):$ Assume that for any $f\in E_{B_{L(X)^*}},$ there exists a net $\{x_\beta^*\otimes x_\beta\}_{\beta\in\Lambda}$, where $x_\beta^*\in S_{X^*}$, $x_\beta\in S_X$ and $|x_\beta^*(x_\beta)|=1$, such that $x_\beta^*\otimes x_\beta\xrightarrow{weak^*} f$ in $L(X)^*$. Therefore, for any $T\in L(X)$, $[x_\beta^*\otimes x_\beta](T)\to f(T)$. In particular, $[x_\beta^*\otimes x_\beta](\text{Id})\to f(\text{Id})$ and consequently, $|x_\beta^*(x_\beta)|\to |f(\text{Id})|$. Since for each $\beta\in \Lambda$, $|x_\beta^*(x_\beta)|=1$, it follows that $|f(\text{Id})|=1$. Thus, from Proposition $3.2$ of \cite{KMMP}, we conclude that $\text{Id}$ is a spear element of $L(X)$. Again, from Proposition $1.1$ of \cite{KMMP} we know that: A Banach space $X$ has numerical index one if and only if Id is a spear element of $L(X)$. This establishes that $n(X)=1$ and finishes the proof.
\end{proof}

Our next goal is to show that McGregor's characterization of finite-dimensional Banach spaces having numerical index one can be obtained as a consequence of the above theorem. We begin by recalling the following fundamental characterization from \cite{LO}. Prior to this work, the corresponding result in the setting of real Banach spaces was established in \cite[Theorem $1.3$]{RSL}.

\begin{theorem}\cite[Theorem $1$]{LO}\label{Lima}
Let $K(X, Y)$ be the space of all compact operators from a Banach space $X$ to a Banach space $Y$, either both real or both complex. Then
\[
E_{B_{K(X, Y)^*}}=E_{B_{X^{**}}}\otimes E_{B_{Y^*}}.
\]

\end{theorem}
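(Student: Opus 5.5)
The plan is to identify $K(X,Y)^*$ with a quotient of the projective tensor product $X^{**}\widehat{\otimes}_\pi Y^*$, and then describe its unit ball as a weak*-closed convex hull of elementary tensors. Throughout, $x^{**}\otimes y^*$ denotes the functional $T\mapsto x^{**}(T^*y^*)$ on $K(X,Y)$.

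The first step is to embed $K(X,Y)$ isometrically into $C(B_{X^{**}}\times B_{Y^*})$, with both balls carrying their weak*-topologies, via
\[
T\mapsto \bigl[(x^{**},y^*)\mapsto x^{**}(T^*y^*)\bigr].
\]
Continuity of this map is exactly where compactness of $T$ enters. Since $T^*$ is compact, it maps the weak*-compact ball $B_{Y^*}$ norm-continuously. Hence the function is jointly continuous when $B_{X^{**}}$ carries the weak*-topology and $B_{Y^*}$ carries its weak*-topology.

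The embedding is isometric because $\|T\|=\|T^*\|=\sup|x^{**}(T^*y^*)|$. It remains isometric if we restrict to $K:=\overline{E_{B_{X^{**}}}}\times\overline{E_{B_{Y^*}}}$, since a weak*-continuous (affine in each variable) function attains its supremum modulus at extreme points by Krein--Milman.

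By Hahn--Banach and Riesz, every functional on $K(X,Y)$ extends to a measure on $K$. Applying Arens--Kelley (the extreme points of the dual ball of a subspace $A\subseteq C(K)$ lie among $\{\lambda\delta_k|_A\}$), we obtain
\[
E_{B_{K(X,Y)^*}}\subseteq S^1\cdot\{x^{**}\otimes y^*: x^{**}\in \overline{E_{B_{X^{**}}}},\ y^*\in\overline{E_{B_{Y^*}}}\}.
\]

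The main obstacle is removing the closures, that is, showing that an extreme $f=x^{**}\otimes y^*$ must have both factors extreme. For this I would use Lemma~\ref{BL2.1}/Remark~\ref{Th1 remark}, or a direct argument. Suppose $x^{**}=\tfrac12(a+b)$ with $a\neq b$ in $B_{X^{**}}$. Then
\[
f=\tfrac12\bigl(a\otimes y^*+b\otimes y^*\bigr),
\]
and both summands lie in $B_{K(X,Y)^*}$. To conclude that $f$ is not extreme, one must check that $a\otimes y^*\neq b\otimes y^*$ as functionals on $K(X,Y)$. This holds because rank-one operators $x^*\otimes y$, with $y^*(y)\neq0$ and $x^*\in X^*$, separate points: $(a-b)(x^*)\,y^*(y)\neq0$ for a suitable $x^*$. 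The same argument applies to the second factor. The moduli are absorbed into $x^{**}$, since scalar multiples of extreme points are extreme.

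For the reverse inclusion, I would take extreme points $x^{**}$ and $y^*$ and suppose $x^{**}\otimes y^*=\tfrac12(g+h)$. Writing $g$ and $h$ via representing measures on $K$, the task is to show the measures concentrate at $(x^{**},y^*)$. The key tool is the Choquet/Bauer-type fact that the evaluation at an extreme point is represented only by the Dirac measure, tested against the separating rank-one operators. Finally, the real and complex cases are treated uniformly by using $\operatorname{Re}$ in the face arguments.
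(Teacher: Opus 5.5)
The paper gives no proof of this theorem. It imports it from Lima--Olsen \cite{LO} (and from Ruess--Stegall \cite{RSL} in the real case), so your proposal has to stand on its own.

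\textbf{The inclusion $\subseteq$.} Your first half is sound. The isometric embedding into $C(K)$ with $K=\overline{E_{B_{X^{**}}}}\times\overline{E_{B_{Y^*}}}$, then Arens--Kelley, then the splitting argument with rank-one operators $x^*\otimes y$ correctly give $E_{B_{K(X,Y)^*}}\subseteq E_{B_{X^{**}}}\otimes E_{B_{Y^*}}$. However, removing the closures is the easy step, not the main obstacle. The substantive part of the theorem is the reverse inclusion, and there you offer only a gesture, which is partly wrong.

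\textbf{The target for $\supseteq$ is misstated.} Representing measures do not concentrate at the point $(x^{**},y^*)$. The probability measure $\tfrac12(\delta_{(x^{**},y^*)}+\delta_{(-x^{**},-y^*)})$ represents $x^{**}\otimes y^*$. In the complex case so does normalized arc length on $\{(\lambda x^{**},\bar\lambda y^*):\lambda\in S^1\}$. Hahn--Banach extensions can also cancel: $\tfrac12(\delta_{(x^{**},y^*)}-\delta_{(-x^{**},y^*)})$ has norm one and represents $x^{**}\otimes y^*$ too. The correct target is this: every probability measure $\mu$ on $K$ with $\int u\otimes v\,d\mu=x^{**}\otimes y^*$ satisfies $P_*\mu=\delta_{x^{**}\otimes y^*}$, where $P(u,v)=u\otimes v$. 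This suffices because $B_{K(X,Y)^*}$ is the weak*-closed convex hull of the rounded compact set $P(K)$. Hence $g$ and $h$ are barycenters of probability measures on $P(K)$, which lift to $K$.

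\textbf{The Bauer step needs norm attainment.} Bauer's theorem concerns probability measures on $B_{X^{**}}$ with barycenter $x^{**}$. Testing $\mu$ against $x^*\otimes y$ only gives $\int v(y)\,u\,d\mu(u,v)=y^*(y)\,x^{**}$. After correcting phases, this is a positive measure of mass $m_y=\int|v(y)|\,d\mu/|y^*(y)|\geq 1$ whose resultant is $x^{**}$. That is a genuine Bauer situation only when $|y^*(y)|=\|y\|=1$, i.e.\ when $y^*$ attains its norm. In that case Bauer, applied once in each variable, does finish the proof, and this covers the finite-dimensional use made in the paper. But extreme points of $B_{Y^*}$, or of $B_{X^{**}}$ acting on $X^*$, need not attain their norms. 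By James' theorem this happens in any non-reflexive separable space renormed to have a strictly convex dual ball.

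\textbf{What is missing.} You need a limiting argument plus a coupling of phases. Write $\operatorname{sgn} z=z/|z|$ and take $y\in B_Y$, $x^*\in B_{X^*}$.
\begin{enumerate}
\item As $y^*(y)\uparrow 1$, consider the probability measures $f\mapsto\bigl(\int|v(y)|\,d\mu\bigr)^{-1}\int f(\operatorname{sgn}(v(y))\,u)\,|v(y)|\,d\mu$. Their barycenters are $x^{**}/m_y\to x^{**}$. By weak*-compactness and Bauer they converge to $\delta_{x^{**}}$. This gives $\mu\{\operatorname{sgn}(v(y))\,u\in U\}\to 1$ for every weak*-neighbourhood $U$ of $x^{**}$.
\item Symmetrically, $\mu\{\operatorname{sgn}(u(x^*))\,v\in V\}\to 1$ as $x^{**}(x^*)\uparrow 1$.
\item The identity $\int u(x^*)v(y)\,d\mu=x^{**}(x^*)\,y^*(y)\approx 1$ forces $\operatorname{sgn}(u(x^*))\approx\overline{\operatorname{sgn}(v(y))}$ on most of the mass. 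Hence $u\otimes v=(\theta u)\otimes(\bar\theta v)$, with $\theta=\operatorname{sgn}(v(y))$, lands in any prescribed neighbourhood of $x^{**}\otimes y^*$.
\end{enumerate}
This explicit coupling of the two phases is what handles the circles $\{(\lambda u,\bar\lambda v)\}$ in the complex case. The phrase ``using $\operatorname{Re}$ in the face arguments'' does not supply it.
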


We are now ready to deduce McGregor's characterization from our main result Theorem \ref{Infinite}.

\begin{corollary}\cite[Theorem $3.1$]{MG}
Let $X$ be a finite-dimensional Banach space. Then $n(X)=1$ if and only if $|x^*(x)|=1$ for every $(x, x^*)\in E_{B_X}\times E_{B_{X^*}}$.
\end{corollary}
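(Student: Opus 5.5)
Since $X$ is finite-dimensional, $L(X)=K(X,X)$, so Theorem \ref{Lima} (with $Y=X$) applies and gives
\[
E_{B_{L(X)^*}}=E_{B_{X^{**}}}\otimes E_{B_{X^*}}=\{x^*\otimes x: x\in E_{B_X},\ x^*\in E_{B_{X^*}}\}.
\]
Here we identify $X^{**}$ with $X$ via $\psi$, and read the functional $\chi\otimes y^*$ as $T\mapsto \chi(T^*y^*)=y^*(Tx)$ when $\chi=\psi(x)$. This is exactly our $x^*\otimes x$. The plan is to combine this explicit description with the equivalence (a)$\iff$(d) of Theorem \ref{Infinite}.

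\emph{Sufficiency.} Assume $|x^*(x)|=1$ for all $(x,x^*)\in E_{B_X}\times E_{B_{X^*}}$. Take any $f\in E_{B_{L(X)^*}}$. By the description above, $f=x^*\otimes x$ with $x\in E_{B_X}\subseteq S_X$, $x^*\in E_{B_{X^*}}\subseteq S_{X^*}$ and $|x^*(x)|=1$. The constant net $x_\beta^*\otimes x_\beta:=x^*\otimes x$ then witnesses condition (d) of Theorem \ref{Infinite}, so $n(X)=1$. One could equally verify (c); either route suffices.

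\emph{Necessity.} Assume $n(X)=1$ and let $x\in E_{B_X}$, $x^*\in E_{B_{X^*}}$. By Theorem \ref{Lima}, $f:=x^*\otimes x\in E_{B_{L(X)^*}}$. By (a)$\Rightarrow$(d) of Theorem \ref{Infinite}, there is a net $x_\beta^*\otimes x_\beta\to f$ weak$^*$ with $|x_\beta^*(x_\beta)|=1$. Evaluating at $\text{Id}$, exactly as in the proof of (d)$\Rightarrow$(a), gives $|x^*(x)|=|f(\text{Id})|=\lim_\beta|x_\beta^*(x_\beta)|=1$. Alternatively, one can skip the net: $n(X)=1$ means $\text{Id}$ is a spear of $L(X)$ (Proposition 1.1 of \cite{KMMP}), and Proposition 3.2 of \cite{KMMP} gives $|f(\text{Id})|=1$ for every $f\in E_{B_{L(X)^*}}$.

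The only delicate step is the identification in Theorem \ref{Lima}. One must check that, under the canonical isomorphism $X^{**}\cong X$ in finite dimensions, the extreme points of $B_{X^{**}}$ are exactly $\psi(E_{B_X})$, which holds because $\psi$ is an isometric isomorphism. One must also check that the elementary tensor $\psi(x)\otimes x^*$ acts on $L(X)=K(X,X)$ as $T\mapsto x^*(Tx)$. Once this bookkeeping is settled, both directions are immediate from Theorem \ref{Infinite}.
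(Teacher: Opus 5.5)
Your proposal is correct and follows the paper's proof essentially verbatim. Theorem~\ref{Lima} identifies $E_{B_{L(X)^*}}$ with $\{x^*\otimes x: x\in E_{B_X},\ x^*\in E_{B_{X^*}}\}$; necessity comes from evaluating the approximating net of Theorem~\ref{Infinite} at $\text{Id}$, and sufficiency from the constant net in condition (d). You are also slightly more explicit than the paper in invoking Theorem~\ref{Lima} for the sufficiency direction, which the paper leaves implicit.
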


\begin{proof}
To prove the necessary part, consider any $(x^*, x)\in E_{B_{X^*}}\times E_{B_X}$. It follows from Theorem \ref{Lima} that $x^*\otimes x\in E_{B_{L(X)^*}}$. We claim that $|x^*(x)|=1$. Applying Theorem \ref{Infinite}, there exists a sequence $\{x_n^*\otimes x_n\}$ with $|x_n^*(x_n)|=1$ for each $n\in \mathbb{N}$, such that $ x_n^*\otimes x_n\to x^*\otimes x. $ It follows that
\[ [x_n^*\otimes x_n](\text{Id})\to [x^*\otimes x](\text{Id}) \implies  |x_n^*(x_n)|\to |x^*(x)|. \]
Since $|x_n^*(x_n)|=1$, for each $n\in \mathbb{N}$, therefore, $|x^*(x)|=1$. Conversely, for the sufficient part, let us assume that for any $(x^*, x)\in E_{B_{X^*}}\times E_{B_X}$, it holds that $|x^*(x)|=1$. Considering the constant sequence $\{x^*\otimes x\},$ it follows from Theorem \ref{Infinite} that $n(X)=1$. This establishes the result.
\end{proof}

The tensor product $X \otimes Y$ of Banach spaces $X$ and $Y$, equipped with the projective norm, is denoted by $X \otimes_\pi Y$, and its completion is written as $X \widehat{\otimes}_\pi Y$. Let us denote the space of all bounded bilinear forms $\mathcal{B}$ on $X \times Y$ by  $\mathscr{B}(X,Y)$ ($\mathscr{B}(X)$ whenever $X=Y$), which is a Banach space with norm
\[
\|\mathcal{B}\| := \sup\{|\mathcal{B}(x, y)| : x \in B_X, y \in B_Y\}.
\]
From \cite{BF}, we have the identification
\begin{equation}\label{tensor}
(Y \widehat{\otimes}_\pi X)^* \cong \mathscr{B}(Y, X) \cong L(Y, X^*) \cong L(X, Y^*).
\end{equation}

\medskip

Let $Y$ be a reflexive Banach space and let $Y^*=X.$ We can define an isometric isomorphism $\phi_0:(X^* \widehat{\otimes}_\pi X)^{*}\to L(X)$ given by
\[ \phi_0(u)=T_u, \qquad\forall~u\in (X^* \widehat{\otimes}_\pi X)^{*}, \] 
where $T_u(x)(x^*)=u(x^*\otimes x).$ Since $X$ is reflexive, we have the identification $L(X) = L(X, X^{**})$. Next, define $\phi_1:L(X)^*\to (X^* \widehat{\otimes}_\pi X)^{**},$ given by 
\[
\phi_1(g)(u)=g(\phi_0(u)), \qquad\forall~u\in (X^* \widehat{\otimes}_\pi X)^{*}. 
\]
Since $\phi_0$ is an isometric isomorphism, it is not difficult to see that $\phi_1$ is also an isometric isomorphism. By continuing with the same notations, the above discussion essentially leads to the following corollary to Theorem \ref{Infinite}.

\begin{corollary}
Let $X$ be a reflexive Banach space. Then the following statements are equivalent
\begin{itemize}
\item[(a)] $n(X)=1$.
\item[(b)] For any $(g\circ \phi_0)\in E_{B_{(X^* \widehat{\otimes}_\pi X)^{**}}}$, there exists a net $\{[x_\beta^*\otimes x_\beta]\circ \phi_0\}_{\beta\in\Lambda}$, where $x_\beta^*\in E_{B_{X^*}}$, $x_\beta\in S_X$ and $|x_\beta^*(x_\beta)|=1$, such that $[x_\beta^*\otimes x_\beta]\circ \phi_0\xrightarrow{weak^*} (g\circ \phi_0)$ in $(X^* \widehat{\otimes}_\pi X)^{**}$.
\item[(c)] For any $(g\circ \phi_0)\in E_{B_{(X^* \widehat{\otimes}_\pi X)^{**}}}$, there exists a net $\{[x_\beta^*\otimes x_\beta]\circ \phi_0\}_{\beta\in\Lambda}$, where $x_\beta^*\in S_{X^*}$, $x_\beta\in S_X$ and $|x_\beta^*(x_\beta)|=1$, such that $[x_\beta^*\otimes x_\beta]\circ \phi_0\xrightarrow{weak^*} (g\circ \phi_0)$ in $(X^* \widehat{\otimes}_\pi X)^{**}$.
\end{itemize}
\end{corollary}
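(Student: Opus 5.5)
The plan is to transport Theorem \ref{Infinite} along the isometric isomorphism $\phi_1:L(X)^*\to (X^* \widehat{\otimes}_\pi X)^{**}$, $\phi_1(g)=g\circ\phi_0$. Since $\phi_1$ is a surjective linear isometry, it maps $B_{L(X)^*}$ onto $B_{(X^* \widehat{\otimes}_\pi X)^{**}}$ and hence $E_{B_{L(X)^*}}$ onto $E_{B_{(X^* \widehat{\otimes}_\pi X)^{**}}}$. Every extreme point of the latter ball is therefore of the form $g\circ\phi_0$ with $g\in E_{B_{L(X)^*}}$, and conversely. Likewise $\phi_1(x^*\otimes x)=[x^*\otimes x]\circ\phi_0$. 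The hypotheses on the nets in (b) and (c) of the corollary are literally the hypotheses of (c) and (d) of Theorem \ref{Infinite}, so only the convergence mode needs translating.

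The key step is to show that $\phi_1$ is a homeomorphism from the weak$^*$ topology of $L(X)^*$, namely $\sigma(L(X)^*,L(X))$, onto the weak$^*$ topology of $(X^* \widehat{\otimes}_\pi X)^{**}$, namely $\sigma\big((X^* \widehat{\otimes}_\pi X)^{**},(X^* \widehat{\otimes}_\pi X)^{*}\big)$. Unwinding definitions gives
\[
\phi_1(g_\beta)\to\phi_1(g)\ \text{weak}^* \iff g_\beta(\phi_0(u))\to g(\phi_0(u))\ \ \forall u\in (X^* \widehat{\otimes}_\pi X)^{*}.
\]
Since $\phi_0$ is onto $L(X)$, this is equivalent to $g_\beta(T)\to g(T)$ for all $T\in L(X)$, i.e. weak$^*$ convergence in $L(X)^*$. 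Here reflexivity is used: it gives the identification $L(X)=L(X,X^{**})\cong (X^*\widehat{\otimes}_\pi X)^*$ via \eqref{tensor}, which makes $\phi_0$ a surjection onto $L(X)$ itself. I also need to check that $T_u$ is well defined with values in $X$ by identifying $X^{**}$ with $X$.

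With this in place, (a)$\Rightarrow$(b) goes as follows. Take an extreme point $g\circ\phi_0$, so that $g\in E_{B_{L(X)^*}}$. Theorem \ref{Infinite}(a)$\Rightarrow$(c) gives a net $x_\beta^*\otimes x_\beta\to g$ weak$^*$, and applying $\phi_1$ yields the required net. The implication (b)$\Rightarrow$(c) is trivial since $E_{B_{X^*}}\subseteq S_{X^*}$. For (c)$\Rightarrow$(a), given $f\in E_{B_{L(X)^*}}$, the functional $\phi_1(f)$ is extreme, so (c) supplies a net whose images converge to it. Pulling back by $\phi_1^{-1}$ gives $x_\beta^*\otimes x_\beta\to f$ weak$^*$ in $L(X)^*$, which is condition (d) of Theorem \ref{Infinite}, and hence $n(X)=1$.

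The main obstacle is the bookkeeping in the key step. I must verify that $\phi_0$ is genuinely an isometric surjection onto $L(X)$ with its operator norm, and that $\phi_1$ is its adjoint-type map, so that the weak$^*$ topologies correspond as claimed. I would first check this directly from $T_u(x)(x^*)=u(x^*\otimes x)$ and the projective norm duality.
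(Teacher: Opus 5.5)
Your proposal is correct and follows the same route as the paper: you transport Theorem \ref{Infinite} through the isometric isomorphism $\phi_1$, which maps extreme points to extreme points and, being the adjoint of the surjective isometry $\phi_0$, is a weak$^*$-to-weak$^*$ homeomorphism. The paper states the corollary as an immediate consequence of the preceding discussion. Your argument supplies the details it omits, in particular the correspondence of the weak$^*$ topologies, where reflexivity is exactly what makes $\phi_0$ onto $L(X)$.
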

We next derive the exact expression of the extreme points of the unit ball $B_{L(X)_w^*},$ for a finite-dimensional Banach space $X$. We note that a subset relation for the same was previously established in \cite{M}. On the other hand, our approach not only revisits this relationship from a different perspective but also establishes the converse inclusion, thereby improving  the following Theorem.

\begin{theorem}\cite[Theorem $2.3$]{M}
Let $X$ be a finite-dimensional Banach space. Then 
\[
E_{B_{L(X)_{w}^*}}\subseteq \{x^*\otimes x: x^*\in E_{B_{X^*}}, x\in E_{B_X}, |x^*(x)|=1\}.
\] 
\end{theorem}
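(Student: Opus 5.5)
Since $X$ is finite-dimensional, it is reflexive. Corollary \ref{reflexive} therefore already gives
\[
E_{B_{L(X)_{w}^*}}\subseteq \overline{\mathscr{N}}^{weak^*},\qquad \mathscr{N}:=\{x^*\otimes x: x^*\in E_{B_{X^*}}, x\in E_{B_X}, |x^*(x)|=1\}.
\]
The plan is to remove the closure: I will show that the extreme points of $B_{L(X)_w^*}$ lie in $\mathscr{N}$ itself. I will do this by proving that $\mathscr{N}$ is already compact, rather than re-deriving extreme point information. In finite dimensions $L(X)_w^*$ is finite-dimensional, so the weak* topology coincides with the norm topology. It is therefore enough to show that $\mathscr{N}$ is norm closed, which is the same as compact since it is bounded (it lies in $B_{L(X)_w^*}$ by Theorem \ref{Th1}).

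The compactness argument is by sequences. Take $x_n^*\otimes x_n\in\mathscr{N}$ converging to some $f$. By compactness of $B_{X^*}$ and $B_X$, pass to a subsequence with $x_n^*\to x^*$ and $x_n\to x$. Then $x_n^*\otimes x_n\to x^*\otimes x$ pointwise on $L(X)$, since
\[
|x_n^*(Tx_n)-x^*(Tx)|\le \|T\|\,\bigl(\|x_n-x\|+\|x_n^*-x^*\|\bigr).
\]
Hence $f=x^*\otimes x$, and continuity gives $|x^*(x)|=1$.

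The obstacle is that in general $E_{B_X}$ and $E_{B_{X^*}}$ are not closed, even in finite dimensions (take $\dim\ge 3$), so the limits $x$ and $x^*$ need not be extreme. The remedy is to re-represent $f$. Since $|x^*(x)|=1$, the functional $\overline{x^*(x)}\,x^*$ lies in $J(x)$, and $x$ lies in the face $F=\{y\in B_X:\ x^*(y)=x^*(x)\}$. I will use the following decomposition. By Krein–Milman/Carathéodory in finite dimensions, $x$ is a convex combination of extreme points $y_j$ of the face $F$, and these are extreme points of $B_X$. Each $y_j$ satisfies $x^*(y_j)=x^*(x)$, so $|x^*(y_j)|=1$. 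Similarly, $x^*$ is a convex combination of extreme points $z_k^*$ of the face $\{z^*\in B_{X^*}:\ z^*(x)=x^*(x)\}$. Bilinearity then gives
\[
x^*\otimes x=\sum_{j,k}\lambda_j\mu_k\, z_k^*\otimes y_j .
\]
It remains to check that each pair satisfies $|z_k^*(y_j)|=1$. We have $z_k^*(x)=x^*(x)$, and $x=\sum_j\lambda_j y_j$ is an average of points in $B_X$ on which $|z_k^*|\le 1$. Because $z_k^*(x)$ has modulus one, every term must satisfy $z_k^*(y_j)=z_k^*(x)$. So every $z_k^*\otimes y_j\in\mathscr{N}$.

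I will not actually need $\mathscr{N}$ closed. The decomposition shows $\overline{\mathscr{N}}\subseteq\operatorname{co}(\mathscr{N})$, and $\mathscr{N}\subseteq B_{L(X)_w^*}$. So an extreme point $f$ of $B_{L(X)_w^*}$ lying in $\overline{\mathscr{N}}$ is a convex combination of elements of $B_{L(X)_w^*}$. Extremality forces all the terms with positive weight to equal $f$, hence $f\in\mathscr{N}$. This proves the inclusion.
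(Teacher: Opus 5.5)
Your proof is correct, and its key step differs from the paper's. The paper proves this inclusion in the first half of the proof of Theorem \ref{completely continuous}. It works with the larger set $\mathscr{A}$ of Theorem \ref{AM2.2}, which, unlike your $\mathscr{N}$, is genuinely closed; your subsequence argument is exactly how the paper shows this. So Milman's converse of the Krein--Milman theorem places every extreme point $x^*\otimes x$ in $\mathscr{A}$ itself. Extremality of the factors is then forced by splitting. If $x^*=tx_1^*+(1-t)x_2^*$, then $|x_i^*(x)|=1$, so $x^*\otimes x=t\,(x_1^*\otimes x)+(1-t)\,(x_2^*\otimes x)$ is a convex combination inside $B_{L(X)_w^*}$. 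Hence $x_i^*\otimes x=x^*\otimes x$, and $x_i^*=x^*$ by the injectivity lemma \cite[Lemma $2.2$]{M}; the same works for $x$.

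You instead keep the refined set $\mathscr{N}$ throughout and handle its non-closedness directly. You decompose a limit $x^*\otimes x$ over the exposed faces $\{y\in B_X: x^*(y)=x^*(x)\}$ and $\{z^*\in B_{X^*}: z^*(x)=x^*(x)\}$. Because a unimodular scalar is an extreme point of the closed unit disc, every cross term $z_k^*\otimes y_j$ lies in $\mathscr{N}$. This gives $\overline{\mathscr{N}}\subseteq\operatorname{co}(\mathscr{N})$, and extremality finishes. Your route avoids the injectivity lemma and shows explicitly how each limit point of $\mathscr{N}$ is a finite average of points of $\mathscr{N}$. The paper's route needs only the easy fact that $\mathscr{A}$ is norming: no refined norming set from Theorem \ref{Th1} or Corollary \ref{reflexive}, and no Carath\'eodory-type decomposition.

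Two small remarks. First, you do not need Corollary \ref{reflexive}, whose proof the paper only sketches. Your decomposition uses nothing about the limit beyond $\|x\|,\|x^*\|\le 1$ and $|x^*(x)|=1$, so you could start from Theorem \ref{Th1} instead.

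Second, delete the opening announcement that you will prove $\mathscr{N}$ compact. $\mathscr{N}$ is in general not closed: an element $x^*\otimes x$ with $\|x\|=1$ determines $x$ up to a unimodular factor. So a sequence in $E_{B_X}$ converging to a non-extreme point produces limit points of $\mathscr{N}$ outside $\mathscr{N}$. Your final argument rightly does not use compactness.
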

We require the following preliminary observation for our purpose, the proof of which is omitted because it is rather straightforward.

\begin{lemma}\label{dual ball}
Let $ X $ be a Banach space equipped with two norms $ \|\cdot\|_1 $ and $ \|\cdot\|_2 $ such that $ \|x\|_1 \leq \|x\|_2 $ for all $ x \in X $. If $ \|\cdot\|_i^* $ denotes the dual norm corresponding to $ \|\cdot\|_i $ for $ i = 1, 2 $, then
\[
\|f\|_2^* \leq \|f\|_1^*, \quad \forall~ f \in X^*.
\]
\end{lemma}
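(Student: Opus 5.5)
The claim is that if $\|x\|_1\le\|x\|_2$ for all $x$, then the dual norms are ordered the opposite way, $\|f\|_2^*\le\|f\|_1^*$. The plan is to work directly from the definition of the dual norm as a supremum over the unit ball and compare the two unit balls. First I fix notation: for $i=1,2$ and a functional $f$ (continuous for both norms, which is the situation intended in the paper, e.g.\ $L(X)$ with $w(\cdot)\le\|\cdot\|$), set
\[
\|f\|_i^*=\sup\{|f(x)|: \|x\|_i\le 1\}.
\]

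The key step is the inclusion of unit balls $B_2:=\{x:\|x\|_2\le 1\}\subseteq B_1:=\{x:\|x\|_1\le1\}$. This is immediate from the hypothesis: if $\|x\|_2\le 1$, then $\|x\|_1\le\|x\|_2\le 1$.

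A supremum over a smaller set is no larger, so
\[
\|f\|_2^*=\sup_{x\in B_2}|f(x)|\le \sup_{x\in B_1}|f(x)|=\|f\|_1^*.
\]
This holds for every $f$, and the value $+\infty$ is allowed on the right if $f$ is not $\|\cdot\|_1$-bounded. Equivalently, one can argue pointwise: $|f(x)|\le\|f\|_1^*\|x\|_1\le\|f\|_1^*\|x\|_2$, so $\|f\|_1^*$ is a bound for $f$ with respect to $\|\cdot\|_2$, and $\|f\|_2^*$ is the least such bound.

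There is no real obstacle here. The only point needing care is the bookkeeping about which dual space $f$ lives in. Since $\|\cdot\|_1\le\|\cdot\|_2$, any $\|\cdot\|_1$-continuous functional is automatically $\|\cdot\|_2$-continuous, so the inequality is meaningful for every $f$ in the smaller dual. In the intended application to $w(\cdot)\le\|\cdot\|$ the two norms are equivalent, so both duals coincide and the question does not arise.
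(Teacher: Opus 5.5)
Your proof is correct: the inclusion $\{x:\|x\|_2\le 1\}\subseteq\{x:\|x\|_1\le 1\}$, or equivalently the bound $|f(x)|\le\|f\|_1^*\|x\|_2$, gives $\|f\|_2^*\le\|f\|_1^*$, and the $+\infty$ convention covers the ambiguity about which dual $f$ lies in. The paper omits this proof as straightforward, and your argument is the standard one it relies on later, with $\|\cdot\|_1=w(\cdot)$ and $\|\cdot\|_2=\|\cdot\|$, to obtain $B_{L(X)_w^*}\subseteq B_{L(X)^*}$.
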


\begin{theorem}\label{completely continuous}
Let $X$ be a finite-dimensional Banach space. Then
\[
E_{B_{L(X)_{w}^*}}=\{x^*\otimes x: x^*\in E_{B_{X^*}}, x\in E_{B_X}, |x^*(x)|=1\}.
\] 
\end{theorem}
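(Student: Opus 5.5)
The inclusion ``$\subseteq$'' is exactly \cite[Theorem $2.3$]{M}, quoted just above, so only the reverse inclusion needs an argument. Fix $x^*\in E_{B_{X^*}}$ and $x\in E_{B_X}$ with $|x^*(x)|=1$, and write $x^*(x)=\mu$ with $\mu\in S^1$. The plan is to show that $x^*\otimes x$ lies in the smaller ball $B_{L(X)_w^*}$ and is an extreme point of the larger ball $B_{L(X)^*}$. This suffices, since an extreme point of a convex set that belongs to a convex subset is extreme in that subset.

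\emph{Step 1: $x^*\otimes x\in B_{L(X)_w^*}$.} Since $\|x\|=1$ and $\overline{\mu}x^*(x)=1$, the functional $\overline{\mu}x^*$ lies in $J(x)$. Hence for every $T\in L(X)$,
\[
|[x^*\otimes x](T)|=|\overline{\mu}x^*(Tx)|\le w(T),
\]
because $\overline{\mu}x^*(Tx)\in W(T)$. So the $w$-dual norm of $x^*\otimes x$ is at most $1$.

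\emph{Step 2: comparison of the two dual balls.} Since $w(T)\le\|T\|$ for all $T\in L(X)$, Lemma \ref{dual ball}, applied with $\|\cdot\|_1=w(\cdot)$ and $\|\cdot\|_2=\|\cdot\|$, gives $\|f\|_{L(X)^*}\le\|f\|_{L(X)_w^*}$ for all $f$. Here we use that $L(X)$ is finite-dimensional, so both duals are the same vector space. Consequently $B_{L(X)_w^*}\subseteq B_{L(X)^*}$.

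\emph{Step 3: $x^*\otimes x\in E_{B_{L(X)^*}}$.} Since $X$ is finite-dimensional, $K(X,X)=L(X)$ and $X^{**}=X$ canonically. Theorem \ref{Lima} with $Y=X$ then gives $E_{B_{L(X)^*}}=E_{B_{X}}\otimes E_{B_{X^*}}$. We must check that the elementary tensor $x\otimes x^*$ in Theorem \ref{Lima} is the functional $T\mapsto x^*(Tx)$, that is, our $x^*\otimes x$. This identification is the one point needing care: it holds for the standard pairing $K(X,Y)^*\ni x^{**}\otimes y^*: T\mapsto x^{**}(T^*y^*)=y^*(Tx)$. Hence $x^*\otimes x\in E_{B_{L(X)^*}}$, and this step uses neither $|x^*(x)|=1$ nor numerical ranges.

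\emph{Step 4: conclusion.} Suppose $x^*\otimes x=\lambda f_1+(1-\lambda)f_2$ with $f_1,f_2\in B_{L(X)_w^*}$ and $\lambda\in(0,1)$. By Step 2, $f_1,f_2\in B_{L(X)^*}$, and by Step 3, $f_1=f_2=x^*\otimes x$. Together with Step 1, this shows $x^*\otimes x\in E_{B_{L(X)_w^*}}$, which proves ``$\supseteq$''. The argument is essentially formal; the only real obstacle is the identification in Step 3, which I expect to be routine.
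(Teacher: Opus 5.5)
Your proof is correct, and for the reverse inclusion it follows the paper's argument essentially verbatim: membership of $x^*\otimes x$ in $B_{L(X)_w^*}$, the inclusion $B_{L(X)_w^*}\subseteq B_{L(X)^*}$ from Lemma \ref{dual ball}, and extremality in $B_{L(X)^*}$ from Theorem \ref{Lima}. You also justify the membership step, which the paper only asserts.

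The only difference is in ``$\subseteq$''. You cite \cite[Theorem $2.3$]{M}, whereas the paper re-derives it through Lemma \ref{BL2.1}, closedness of $\mathscr{A}$, \cite[Theorem $7.8$]{C} and \cite[Lemma $2.2$]{M}. Since the paper quotes that theorem as an established result, citing it is legitimate.
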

\begin{proof}
We first consider the set $\mathscr{A}$ mentioned in Theorem \ref{AM2.2}. Observe that for any $T\in L(X)_w$, $w(T)=\sup\{|[x^*\otimes x](T)|:x^*\otimes x\in \mathscr{A}\}$. Therefore, $\mathscr{A}\subseteq B_{L(X)_w^*}$ is non-empty and it is norming for $L(X)_w$. Evidently, $\mathscr{A}$ is rounded, and therefore, by Lemma \ref{BL2.1} we have $B_{L(X)_w^*}=\overline{\operatorname{co}}(\mathscr{A})$. It now follows from \cite[Theorem $7.8$]{C} that $E_{B_{L(X)_{w}^*}}\subseteq \overline{\mathscr{A}}$. We show that $\mathscr{A}$ is compact. It is enough to show that $\mathscr{A}$ is closed. Consider a sequence $(x_n^*\otimes x_n)_{n\in \mathbb{N}}$ in $\mathscr{A}$ such that $x_n^*\otimes x_n\to f$. Now, sequences $(x_n^*)_{n\in \mathbb{N}}$ and $(x_n)_{n\in \mathbb{N}}$ both has convergent subsequences in $B_{X^*}$ and $B_X,$ respectively. If necessary, passing through a suitable subsequence of natural numbers, we can assume that $x_n^*\to x^*\in B_{X^*}$ and $x_n\to x\in B_{X}$. We now show that $f=x^*\otimes x$. For any $T \in L(X)_w$, the continuity of $T $ ensures that $[x_n^* \otimes x_n](T) \to [x^* \otimes x](T). $ Thus, $x_n^*\otimes x_n\to x^*\otimes x,$ and consequently, $|x_n^*(x_n)|\longrightarrow |x^*(x)|$. Since $|x_n^*(x_n)|=\|x_n^*\|\|x_n\|\longrightarrow\|x^*\|\|x\|$, it follows that $|x^*(x)|=\|x^*\|\|x\|$. Thus, $\mathscr{A}$ is closed, as desired. Therefore, we obtain $E_{B_{L(X)_{w}^*}}\subseteq \mathscr{A}$. Now, for any $x^*\otimes x\in E_{B_{L(X)_{w}^*}}$, it is easy to see that $(x^*, x)\in S_{X^*}\times S_X$. We claim that $(x^*, x)\in E_{B_{X^*}}\times E_{B_X}$. On the contrary, suppose that $x^*\notin E_{B_{X^*}}$. Therefore, there exist $t\in (0, 1)$ and $x_1^*, x_2^*\in B_{X^*}$ such that $x^*=tx_1^*+(1-t)x_2^*$, where $x^*\neq x_1^*, x_2^*$. Now, 
\[
1=|x^*(x)|\leq t|x_1^*(x)|+(1-t)|x_2^*(x)|\leq 1
\]
implies that $|x_1^*(x)|= 1$ and $|x_2^*(x)|= 1$. Thus, $x_1^*\otimes x, x_2^*\otimes x\in \mathscr{A}\subseteq B_{L(X)_w^*}$. Moreover, $x^*\otimes x= t[x_1^*\otimes x]+(1-t)[x_2^*\otimes x]$. However, $x^*\otimes x\in E_{B_{L(X)_{w}^*}}$ implies that $x^*\otimes x= x_1^*\otimes x=x_2^*\otimes x$. Therefore, we obtain from \cite[Lemma $2.2$]{M} that $x^*=x_1^*=x_2^*$ and consequently, $x^*\in E_{B_{X^*}}$. Similarly, we get $x\in E_{B_X}$. Therefore,
\begin{equation}\label{eq2.4}
\begin{aligned}
E_{B_{L(X)_{w}^*}}&\subseteq \{x^*\otimes x: x\in E_{B_X}, x^*\in E_{B_{X^*}}, |x^*(x)|=1\}\\
&\subseteq \{x^*\otimes x: x\in E_{B_X}, x^*\in E_{B_{X^*}}\}\\
&=E_{B_{L(X)^*}}\quad(\text{by using \cite[Theorem $1$]{LO} and the reflexivity of}~X).
\end{aligned}
\end{equation} 
Thus, we are only left to prove that $\{x^*\otimes x: x\in E_{B_X}, x^*\in E_{B_{X^*}}, |x^*(x)|=1\}\subseteq E_{B_{L(X)_{w}^*}}$. Assume that $x_0^*\otimes x_0\in \{x^*\otimes x: x\in E_{B_X}, x^*\in E_{B_{X^*}}, |x^*(x)|=1\}\subseteq B_{L(X)_{w}^*}$. By $(\ref{eq2.4}),$ we have $x_0^*\otimes x_0\in E_{B_{L(X)^*}}$. If possible, let $x_0^*\otimes x_0\notin E_{B_{L(X)_{w}^*}}$. Therefore, there exists $t\in (0, 1)$ and $f_1,\, f_2\in B_{L(X)_w^*}$ such that $x_0^*\otimes x_0=tf_1+(1-t)f_2$, where $x_0^*\otimes x_0\neq f_1,\, f_2$. Also, using Lemma \ref{dual ball}, we have $B_{L(X)_{w}^*}\subseteq B_{L(X)^*}$ and consequently, $f_1,\, f_2\in B_{L(X)^*}$. However, it implies that $x_0^* \otimes x_0 \notin E_{B_{L(X)^*}}$, leading to a contradiction. Therefore, $x_0^*\otimes x_0\in E_{B_{L(X)_{w}^*}}$ and consequently, $\{x^*\otimes x: x\in E_{B_X}, x^*\in E_{B_{X^*}}, |x^*(x)|=1\}\subseteq E_{B_{L(X)_{w}^*}}$. This completes the proof. 
\end{proof}

The above theorem, apart from being interesting its own right, also motivates us to present an alternative proof of McGregor's characterization.

\medskip

\noindent
\textbf{An alternative proof of Theorem \ref{MC}:}
Let $n(X)=1$. Then $B_{L(X)^*}= B_{L(X)_{w}^*}$, and consequently, $E_{B_{L(X)^*}}=E_{B_{L(X)_{w}^*}}$. Now, by applying \cite[Theorem $1$]{LO}, we get 
\[
E_{B_{L(X)^*}}=\{x^*\otimes x:x^*\in E_{B_{X^*}}, x\in E_{B_X}\}=E_{B_{L(X)_{w}^*}}.
\]
Therefore, by Theorem \ref{completely continuous}, it follows that  $|x^*(x)|=1,$ establishing the necessary part. Conversely, let for any $(x, x^*)\in E_{B_X}\times E_{B_{X^*}}$, $|x^*(x)|=1$. Thus, $x^*\otimes x\in E_{B_{L(X)_{w}^*}}$. It follows from \cite[Theorem $1$]{LO} that $x^*\otimes x\in E_{B_{L(X)^*}}$. Therefore, $E_{B_{L(X)_{w}^*}}=E_{B_{L(X)^*}}$. Thus, by applying the Krein-Milman Theorem on $B_{L(X)_w^*}$ and $B_{L(X)^*},$ we get $B_{L(X)_w^*}=B_{L(X)^*}$. We note that for any $T\in L(X)_w$, 
\[ w(T) = \sup\{|f(T)|:f\in B_{L(X)_w^*}\} = \sup\{|f(T)|:f\in B_{L(X)^*}\} = \|T\|. \]
This proves that $n(X)=1,$ finishing the proof.

\medskip

If $X$ is a real polyhedral Banach space, Theorem \ref{completely continuous} provides a counting formula for the extreme points of $B_{L(X)_w^*}$.
\begin{remark}
Let $X$ be an $n$-dimensional real polyhedral Banach space. For any $x_0\in E_{B_X}$, the number of extreme functionals attaining norm at $x_0$ is given by $2|E_{J(x)}|$. Thus, by virtue of Theorem \ref{completely continuous}, we have
\[
|E_{B_{L(X)_{w}^*}}|=2\displaystyle{\sum_{x\in E_{B_X}}}|E_{J(x)}|.
\]
\end{remark}
\begin{example}
Let $X=\ell_\infty^n(\mathbb{R})$. Using the well-known identification of $X^*$ with $\ell_1^n(\mathbb{R}),$ it is easy to see that $|E_{J(x)}|=n$, for each $x\in E_{B_X}$. Since $|E_{B_X}|=2^n,$ we have 
\[
|E_{B_{L(X)_{w}^*}}|=n2^{n+1}.
\]
\end{example}
As another application of the extremal study conducted in this article, we next characterize the exposed points of $B_{L(X)_w^*}$, for finite-dimensional Banach space $X$, using the so-called nu-smooth operators. We denote the numerical radius attainment set of $T$ by $M_{w(T)},$ which is defined as 
\begin{equation}\label{NAS}
M_{w(T)} :=\{(x, x^*)\in S_X\times S_{X^*}: x^*(x)=1, |x^*(Tx)|=w(T)\}.  
\end{equation}    
We denote by $\mathbb{M}_f$  the norm attainment set of a functional $f\in S_{L(X)_w^*}$, defined as
\[
\mathbb{M}_f :=\{A\in B_{L(X)_w}:f(A)=w(A)=1\}.
\]
It is clear that if $f$ is an exposing functional for some $A\in B_{L(X)_w}$ then $\mathbb{M}_f$ is a singleton. Let us also recall that a non-zero element $x\in S_X$ is said to be \textit{smooth} if $J(x)$ is a singleton. The space $X$ is said to be smooth if each non-zero $x\in S_X$ is smooth. We record the following elementary observation.

\begin{proposition}\label{exposed points}
Let $X$ be a reflexive Banach space. Then $f\in B_{X^*}$ is an exposed point of $B_{X^*}$ if and only if $J(x)=\{f\}$, for some smooth point $x\in S_X$.
\end{proposition}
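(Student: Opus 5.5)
Because $X$ is reflexive, the canonical embedding $\psi$ identifies $X$ with $X^{**}$. So the functionals on $X^*$ that are weak*-continuous are exactly the elements of $X$, and every such functional attains its norm on $B_{X^*}$. I will work with the definition of exposed point given in the paper: $f$ lies on the boundary of $B_{X^*}$ and some supporting hyperplane $\mathcal{H}$ satisfies $\mathcal{H}\cap B_{X^*}=\{f\}$. Taking this hyperplane to be given by a continuous functional on $X^*$ (norm-closed hyperplanes are what support closed convex sets), reflexivity means the functional is $\psi(x)$ for some $x\in X$. This is the key point.

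\emph{($\Rightarrow$)} Let $f$ be exposed. A supporting hyperplane is determined by some nonzero $\Phi\in X^{**}$, after rotating by a unimodular scalar so that $\operatorname{Re}\Phi$ attains $\sup_{B_{X^*}}\operatorname{Re}\Phi=\|\Phi\|$ exactly at $f$. By reflexivity $\Phi=\psi(x_0)$ for some $x_0\neq\theta$. Normalize to $x=x_0/\|x_0\|\in S_X$. Then
\[
\{g\in B_{X^*}:\operatorname{Re} g(x)=1\}=\{f\}.
\]
Note that $g\in B_{X^*}$ with $\operatorname{Re}g(x)=1$ forces $g(x)=1$ and $\|g\|=1$, so the left-hand side is exactly $J(x)$. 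Hence $J(x)=\{f\}$. In particular $x$ is smooth, $f\in S_{X^*}$, and $f(x)=1$.

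\emph{($\Leftarrow$)} Suppose $J(x)=\{f\}$ for some $x\in S_X$; smoothness is automatic here. Consider the real hyperplane $\mathcal{H}=\{g\in X^*:\operatorname{Re} g(x)=1\}$. It is norm-closed, indeed weak*-closed. Since $\operatorname{Re}g(x)\le\|g\|\le1$ on $B_{X^*}$, the ball lies in the half-space $\operatorname{Re} g(x)\le 1$. Moreover $\mathcal{H}\cap B_{X^*}=J(x)=\{f\}$, by the same computation as above. Also $\|f\|=1$, so $f$ is a boundary point. Therefore $f$ is exposed.

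The main obstacle is the forward direction: justifying that the exposing hyperplane comes from an element of $X$ rather than from an arbitrary element of $X^{**}$. This is exactly where reflexivity is used. Without it, one would only get weak*-exposedness, via elements of $X$, in one direction. Two further points need a line of care: rotating the functional so that $\operatorname{Re}$ attains its maximum at $f$ with value $\|\Phi\|$, and, in the complex case, the identification $\{g\in B_{X^*}:\operatorname{Re}g(x)=1\}=J(x)$. Both are routine.
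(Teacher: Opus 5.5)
The paper states this proposition as an ``elementary observation'' and gives no proof, so there is no argument to compare yours against. Your proof is correct and is the standard one. Reflexivity enters exactly where you say: an exposing functional in $X^{**}$ must come from some $x\in X$. After normalizing, the set $\{g\in B_{X^*}:\operatorname{Re} g(x)=1\}$ is precisely $J(x)$.

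Two small points could be tightened.

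First, continuity of the supporting functional can be proved directly rather than assumed. Suppose a real-linear $\phi$ satisfies $\phi\le c$ on $B_{X^*}$. Since the ball is symmetric, $|\phi|\le c$ on it, so $\phi$ is bounded. In the complex case, write $\phi=\operatorname{Re}\Phi$ with $\Phi(g)=\phi(g)-i\phi(ig)$, which gives $\Phi\in X^{**}$.

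Second, the only ``rotation'' of $\Phi$ that leaves the hyperplane $\mathcal{H}$ unchanged is a sign $\pm1$, and that is all you need. Once $B_{X^*}\subseteq\{\operatorname{Re}\Phi\le c\}$ and $\mathcal{H}\cap B_{X^*}=\{f\}$, you get $c=\|\Phi\|$ and $\Phi(f)=\|\Phi\|$ automatically.

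Finally, the paper only uses the ``if'' direction. It applies it to the space $L(X)_w$ with $J_w(T_\mu(x,x^*))=\{x^*\otimes x\}$. As you note, that direction holds in every Banach space with no reflexivity assumption.
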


For any $T\in L(X)_w$, let $J_w(T)$ denote the collection of all \textit{support functional(s)} at $T$ \textit{with respect to numerical radius norm}, defined by
\[
J_w(T):=\{f\in S_{L(X)_w^*}:f(T)=w(T)\}.
\]
For any non-zero $T\in L(X)_w$, $T$ is nu-smooth if and only if $J_w(T)$ is singleton.

\medskip

A Banach space $X$ is said to be \textit{strictly convex} if $E_{B_{X}}= 
S_X.$ It is rather easy to observe that $X$ is strictly convex if and only if every point of $ S_X $ is an exposed point (in particular, an extreme point) of $ B_X. $ In general, an extreme point of a convex set need not be an exposed point, even in the finite-dimensional case. We end this article with the following result which ensures that for a finite-dimensional smooth strictly convex Banach space $X$, all the extreme points of $B_{L(X)_w^*}$ are exposed points.

\begin{theorem}
Let $X$ be a finite-dimensional strictly convex smooth Banach space. Then, all the extreme points of $B_{L(X)_w^*}$ are exposed points.
\end{theorem}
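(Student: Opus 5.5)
The plan is to use the explicit description of extreme points from Theorem \ref{completely continuous} and, for each extreme point, write down an explicit rank-one operator that exposes it. Since $X$ is finite-dimensional, $L(X)_w$ is finite-dimensional and reflexive. So it suffices, for each $f\in E_{B_{L(X)_w^*}}$, to find $T\in L(X)$ with $w(T)=1$ such that $f$ is the only element $g\in B_{L(X)_w^*}$ with $\operatorname{Re} g(T)=1$.

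\emph{Step 1 (the extreme point).} By Theorem \ref{completely continuous}, $f=x^*\otimes x$ with $x\in S_X$ (automatically extreme, since $X$ is strictly convex), $x^*\in E_{B_{X^*}}$ and $x^*(x)=\mu$, $|\mu|=1$.

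\emph{Step 2 (the candidate operator).} Define
\[
T(y):=\bar\mu^{2}\,x^*(y)\,x, \qquad y\in X .
\]
Then $f(T)=x^*(Tx)=\bar\mu^2\mu\, x^*(x)=\bar\mu^2\mu^2=1$. For any $(y,y^*)\in S_X\times S_{X^*}$ with $y^*(y)=1$ we have $|y^*(Ty)|=|x^*(y)|\,|y^*(x)|\le 1$. Hence $w(T)=1$, and in particular $w(T)\neq 0$, so $T$ is a nonzero norm-one element of $L(X)_w$.

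\emph{Step 3 (the exposed face).} Let $F:=\{g\in B_{L(X)_w^*}:\operatorname{Re} g(T)=1\}$. This is a nonempty compact convex face of $B_{L(X)_w^*}$, so by Krein--Milman it is the convex hull of its extreme points, and these are extreme points of $B_{L(X)_w^*}$. By Theorem \ref{completely continuous} again, any such extreme point has the form $g=y^*\otimes y$ with $y\in S_X$, $y^*\in E_{B_{X^*}}$ and $|y^*(y)|=1$. Since $|g(T)|\le 1$, the condition $\operatorname{Re} g(T)=1$ forces $g(T)=\bar\mu^2 x^*(y)\,y^*(x)=1$, and therefore
\[
|x^*(y)|=1 \quad\text{and}\quad |y^*(x)|=1 .
\]

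\emph{Step 4 (forcing $g=f$).} This is the main step.
\begin{itemize}
\item From $|x^*(y)|=1$: some unimodular multiple of $x^*$ lies in $J(y)$, and so does some unimodular multiple of $y^*$, because $|y^*(y)|=1$. Smoothness of $y$ makes $J(y)$ a singleton, so $y^*=\beta x^*$ for some $|\beta|=1$.
\item From $|y^*(x)|=1$, i.e. $|x^*(x)|=|x^*(y)|=1$: suitable unimodular multiples of $x$ and $y$ are both in the face $\{z\in B_X: x^*(z)=1\}$ of $B_X$. Strict convexity makes this face a single point, so $y=\alpha x$ for some $|\alpha|=1$.
\end{itemize}
Hence $g=\alpha\beta\,(x^*\otimes x)=\alpha\beta f$. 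Then $g(T)=\alpha\beta f(T)=\alpha\beta$, and $g(T)=1$ gives $\alpha\beta=1$, so $g=f$.

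Thus $F$ has $f$ as its only extreme point, so $F=\{f\}$, and $T$ exposes $f$.

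I expect the only delicate point to be the careful bookkeeping of the unimodular phases in Step 4. The analytic content is just the two facts used there: smoothness gives uniqueness of the support functional, and strict convexity makes each $x^*$ attain its norm at a single point of $S_X$ up to phase.
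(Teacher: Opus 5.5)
Your proof is correct. It uses the same exposing operator as the paper: the paper's $T_\mu(x,x^*)(y)=\bar\mu x^*(y)\bar\mu x$ is exactly your $T$. It also uses the same two geometric inputs, smoothness and strict convexity, but closes the argument differently.

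The paper first computes the numerical-radius attainment set $M_{w(T)}$, using smoothness and strict convexity to show it consists only of the pairs $(\bar\lambda x,\overline{\bar\lambda\mu}\,x^*)$. It then invokes Theorem 2.5 of [RS] to conclude that $T$ is nu-smooth, so $J_w(T)=\{x^*\otimes x\}$. Finally it appeals to Proposition \ref{exposed points} for the reflexive space $L(X)_w$.

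You instead observe that $F=J_w(T)$ is a face of $B_{L(X)_w^*}$. By Krein--Milman it is the convex hull of its extreme points, and each of these is of the form $y^*\otimes y$ by Theorem \ref{completely continuous}. Your phase bookkeeping in Step 4, which is essentially the paper's computation of $M_{w(T)}$, then forces every such point to equal $f$.

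In effect you prove in-line the special case of the nu-smoothness criterion from [RS] that the paper cites, and you check the definition of an exposed point directly. So your argument is self-contained modulo Theorem \ref{completely continuous}. The paper's route is shorter on the page and phrases the result in terms of nu-smooth operators, which is its stated aim.

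A minor expository point: in the second bullet of Step 4, what you actually use is $|x^*(x)|=1$ (from Step 1) together with $|x^*(y)|=1$. The condition $|y^*(x)|=1$ is redundant once $y^*=\beta x^*$ has been established.
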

\begin{proof}
We have from Theorem \ref{completely continuous} that
\[
E_{B_{L(X)_{w}^*}}=\{x^*\otimes x: x^*\in E_{B_{X^*}}, x\in E_{B_X}, |x^*(x)|=1\}.
\]
Let $x^*\otimes x\in E_{B_{L(X)_{w}^*}}$ be arbitrarily chosen. Then $(x, x^*)\in E_{B_X}\times E_{B_{X^*}}$ such that $x^*(x)=\mu$, for some $\mu\in S^1$. We claim that $x^*\otimes x$ is an exposed point of $B_{L(X)_w^*}$. To prove our claim, we next construct a $T_\mu(x,x^*)\in L(X)_w$ that is nu-smooth and $J_w(T_\mu(x,x^*))=\{x^*\otimes x\}$. For any $y\in X$, we define 
\[
T_\mu(x,x^*)(y)=\overline{\mu}x^*(y)\overline{\mu}x.
\]
We now show that $M_w(T_\mu(x,x^*))=\left\{\left (\overline{\lambda}~ x, \overline{\overline{\lambda}\mu}~ x^*\right ):\lambda\in S^1\right\}$. Let $(\Tilde{y}, \Tilde{y^*})\in M_w(T_\mu(x,x^*))$. Then by \eqref{NAS} we have $(\Tilde{y}, \Tilde{y^*})\in S_X\times S_{X^*}$ with $\Tilde{y^*}(\Tilde{y})=1$ and $|\Tilde{y^*}(T_\mu(x,x^*)\Tilde{y})|=w(T_\mu(x,x^*))$. It is easy to see that $w(T_\mu(x,x^*))=1$, and 
\[
1=|\Tilde{y^*}(T_\mu(x,x^*)\Tilde{y})|=|\Tilde{y^*}(x^*(\Tilde{y})x)|=|\Tilde{y^*}(x)||x^*(\Tilde{y})|
\]
if and only if 
\[
|\Tilde{y^*}(x)|=1=|x^*(\Tilde{y})|.
\]
Assume that $\Tilde{y^*}(x)=\lambda$ and $x^*(\Tilde{y})=\lambda'$, where $\lambda, \lambda'\in S^1$. It follows from the smoothness of $X$ that $\overline{\lambda}\Tilde{y^*}=\overline{\mu}x^*$ and using strict convexity of $X$ we have $\overline{\lambda'}\Tilde{y}=\overline{\mu}x$. Now, $\Tilde{y^*}(\Tilde{y})=1$ implies that $\lambda'=\overline{\lambda}\mu$. Thus,
\[
M_w(T_\mu(x,x^*))\subseteq\left\{\left (\overline{\lambda}~ x, \overline{\overline{\lambda}\mu}~ x^*\right ):\lambda\in S^1\right\}.
\]
However, $\left\{\left (\overline{\lambda}~ x, \overline{\overline{\lambda}\mu}~ x^*\right):\lambda\in S^1\right\}\subseteq M_w(T_\mu(x, x^*))$, as $(x, \overline{\mu}x^*)\in E_{B_X}\times E_{B_{X^*}}$ and $\overline{\mu}x^*(x)=1$. Consequently, $M_w(T_\mu(x,x^*))=\left\{\left (\overline{\lambda}~ x, \overline{\overline{\lambda}\mu}~ x^*\right ):\lambda\in S^1\right\}$, and by \cite[Theorem $2.5$]{RS}, $T_\mu(x,x^*)$ is nu-smooth. Now, 
\[
[x^*\otimes x](T_\mu(x,x^*))=x^*(T_\mu(x,x^*)(x))=\overline{\mu}x^*(x)\overline{\mu}x^*(x)=1=w(T_\mu(x,x^*)).
\]
Also, by our assumption, we have $x^*\otimes x\in S_{L(X)_{w}^*}$. Therefore, it follows that $x^*\otimes x\in J_w(T_\mu(x,x^*))$.  Since $T_\mu(x,x^*)$ is nu-smooth, $J_w(T_\mu(x,x^*))=\{x^*\otimes x\}$. Thus, Proposition \ref{exposed points} implies that $x^*\otimes x$ is an exposed point of $B_{L(X)_w^*}$. This completes the proof. 
\end{proof}

\bibliographystyle{plain}

\end{document}